\newtheorem{theorem}{Theorem}[section]
\newtheorem{lemma}[theorem]{Lemma}
\newtheorem{proposition}[theorem]{Proposition}
\theoremstyle{definition}
\newtheorem{remark}[theorem]{Remark}
\numberwithin{equation}{section}
\begin{document}

\title[Existence and non-existence for a fractional problem]{Existence and non-existence results for a semilinear fractional Neumann problem}

\author[E. Cinti]{Eleonora Cinti}
\address{Dipartimento di Matematica, Alma Mater Studiorum Universit\`a di Bologna,
  Piazza di Porta San Donato 5, 40126 Bologna, Italy} 
  \email{eleonora.cinti5@unibo.it}  


\author[F. Colasuonno]{Francesca Colasuonno*}
  \email{francesca.colasuonno@unibo.it}

\begin{abstract}
We establish a priori $L^\infty$-estimates for non-negative solutions of a semilinear nonlocal Neumann problem. As a consequence of these estimates, we get non-existence of non-constant solutions under suitable assumptions on the diffusion coefficient and on the nonlinearity. Moreover, we prove an existence result for radial, radially non-decreasing solutions in the case of a possible supercritical nonlinearity, extending to the case $0<s\le 1/2$ the analysis started in \cite{CC}.
\end{abstract}

\thanks{* Corresponding author.}

\keywords{A priori estimates; Moser iteration; nonlocal Neumann problem.}

\subjclass{35B45, 35A01, 35B09, 60G22.}
\maketitle


\section{Introduction} \label{Intro}
For $s\in(0,1)$, we consider the following nonlocal Neumann problem 
\begin{equation}\label{P}
\begin{cases}
d(-\Delta)^s u+u=u^{q-1}\quad&\mbox{in }B,\\
u\ge 0\quad&\mbox{in }B,\\
\mathcal N_s u=0\quad&\mbox{in }\mathbb R^n\setminus \overline B,
\end{cases}
\end{equation}
where $d>0$, $B$ is the unit ball of $\mathbb R^n$ with $n\ge1$, $q>2$, $(-\Delta)^s$ denotes the fractional Laplacian
\begin{equation}\label{FL}
(-\Delta)^su(x):=c_{n,s}\,\mathrm{PV}\int_{\mathbb R^n}\frac{u(x)-u(y)}{|x-y|^{n+2s}}dy,
\end{equation}
$c_{n,s}$ is a normalization constant, and $\mathcal N_s$ is the following nonlocal normal derivative
\begin{equation}\label{Neu}
\mathcal N_s u(x):=c_{n,s}\int_B\frac{u(x)-u(y)}{|x-y|^{n+2s}}dy\quad\mbox{for all }x\in\mathbb R^n\setminus\overline B,
\end{equation}
first introduced in \cite{DROV}. We observe that such nonlocal Neumann condition makes the structure of problem \eqref{P} variational, we refer to \cite[Sections 1 and 3]{DROV} for further comments on this.

Problem \eqref{P} is the analogue in the nonlocal setting of the Lin-Ni-Takagi problem, studied since the eighties, see for instance \cite{LN, LNT, N, NT}.

In \cite{CC}, the existence of a non-constant solution of \eqref{P} was established in the case $s>1/2$, for a more general nonlinearity $f(u)$. 
The aim of this paper is twofold: on one hand we establish a non-existence result for \eqref{P} for any $s\in(0,1)$, on the other hand we complement the existence analysis performed in \cite{CC} with the case $s\le 1/2$. 
Here, to avoid technicalities, we consider only the prototype nonlinearity $u^{q-1}$, but all the results can be extended to more general $f(u)$ (see Remarks \ref{rem1} and \ref{rem2} below).

In order to present our results, we need to introduce some notation. For the operator $(-\Delta)^{s}$ in $B$ under nonlocal Neumann boundary conditions, we denote by $\lambda_{2}$ the second eigenvalue and by $\lambda^+_{2,\mathrm{r}}$ the second eigenvalue whose corresponding eigenfunction is radial and radially non-decreasing, cf. the definitions in \eqref{lambda2rad+} below. 
Moreover, for every $n\ge 1$, we denote by $2^*_n$ the critical exponent for the fractional Sobolev inequality, namely, 
\[
2_n^*:=\begin{cases}\frac{2n}{n-2s}\quad&\mbox{if }0<s<\frac{n}{2},\\
+\infty&\mbox{if }s\ge\frac{n}{2}.
\end{cases}
\]

Our main result on non-existence can be stated as follows. 

\begin{theorem}\label{thm:main-nonexistence}
Let $s\in(0,1)$, $n\ge 1$, and $2<q<\frac{2^*_n+2}{2}$. There exists $d^*>0$ such that for every $d>d^*$, problem \eqref{P} admits only constant solutions.
\end{theorem}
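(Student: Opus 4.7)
The strategy is the classical ``energy + eigenvalue + $L^\infty$-bound'' scheme adapted to the nonlocal Neumann setting, exactly in the spirit of the Lin--Ni--Takagi argument for large diffusion.

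The first and main ingredient, announced in the abstract, is a uniform $L^\infty$ a priori estimate: there exists a constant $M>0$, independent of $d$ for $d$ bounded away from $0$, such that every non-negative solution $u$ of \eqref{P} satisfies $\|u\|_{L^\infty(\mathbb{R}^n)}\le M$. This is where the subcritical assumption $q<(2^*_n+2)/2$ enters, since it guarantees that a Moser-type iteration on the nonlocal equation, performed exactly as in the abstract via testing with powers $u^{\beta}$ and using the fractional Sobolev embedding, closes up and produces a bound depending only on $\|u\|_{L^2}$, which in turn is controlled uniformly by testing \eqref{P} against $u$ itself and using the $L^2$--$L^{2^*_n}$ embedding together with $q-1<2^*_n-1$. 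I would treat this as a separate proposition and assume it here.

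Once the uniform $L^\infty$ bound is available, I would test the equation against $u-\bar u$, where $\bar u:=\fint_B u$. The crucial point is that, thanks to the definition of $\mathcal{N}_s$, the variational formulation associated with \eqref{P} yields
\[
d\,\mathcal{E}_s(u,u-\bar u)+\int_B u(u-\bar u)\,dx=\int_B u^{q-1}(u-\bar u)\,dx,
\]
where $\mathcal{E}_s$ denotes the Gagliardo-type bilinear form on $\mathbb{R}^n$ associated with the Neumann problem; since $\bar u$ is constant, $\mathcal{E}_s(u,u-\bar u)=\mathcal{E}_s(u-\bar u,u-\bar u)$, and the left-hand zero-order term equals $\|u-\bar u\|_{L^2(B)}^2$. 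On the right-hand side I would replace $u^{q-1}$ by $u^{q-1}-\bar u^{q-1}$ (allowed since $\int_B (u-\bar u)=0$) and apply the mean-value inequality together with the $L^\infty$ bound to get
\[
\int_B u^{q-1}(u-\bar u)\,dx\le (q-1)M^{q-2}\|u-\bar u\|_{L^2(B)}^2.
\]

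Finally, the variational characterization of the second Neumann eigenvalue $\lambda_2$ gives $\mathcal{E}_s(u-\bar u,u-\bar u)\ge \lambda_2\|u-\bar u\|_{L^2(B)}^2$, since $u-\bar u$ has zero mean. Combining these,
\[
\bigl(d\lambda_2+1-(q-1)M^{q-2}\bigr)\|u-\bar u\|_{L^2(B)}^2\le 0,
\]
so choosing
\[
d^*:=\max\left\{0,\,\frac{(q-1)M^{q-2}-1}{\lambda_2}\right\}
\]
forces $u\equiv\bar u$ whenever $d>d^*$, i.e.\ $u$ is constant. The main obstacle is really the first step: producing an $L^\infty$ bound whose constant $M$ does not blow up as $d$ varies, since Moser iteration in the fractional Neumann framework requires some care with the tail term in $\mathcal{E}_s$ and with the use of $\mathcal{N}_s u=0$ to discard boundary contributions; the subcritical threshold $(2^*_n+2)/2$ is precisely what makes the iteration converge.
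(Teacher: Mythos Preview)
Your approach is essentially the same as the paper's: the authors prove the uniform $L^\infty$ bound separately (their Theorem~\ref{Linfty-nonex}, via a fractional Moser iteration exactly as you describe), then test the equation against $u-\bar u$, use the mean-value theorem together with the $L^\infty$ bound on the right-hand side, and conclude via the Poincar\'e inequality with $\lambda_2$. One small refinement: the paper's $L^\infty$ estimate reads $\|u\|_{L^\infty(B)}\le K_\infty\max\{1,d^{-\Lambda_q}\}$, so the bound is uniform only once $d\ge 1$; accordingly their threshold is $d^*=\max\{1,\,((q-1)K_\infty^{q-2}-1)/\lambda_2\}$ rather than $\max\{0,\cdot\}$ as you wrote.
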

Theorem \ref{thm:main-nonexistence} extends to the nonlocal setting a result by Ni and Takagi (see \cite[Theorem 3]{NT}). The proof is based on a uniform $L^\infty$- estimate, which we state here below.
	
	\begin{theorem}\label{Linfty-nonex}
		Let $s\in(0,1)$, $n\ge 1$, and $2<q<\frac{2^*_n+2}{2}$. There exists a positive constant $K_\infty=K_\infty(n,s,q)$, depending only on $n,\,s$, and $q$, such that for every $u\in H^s_{B,0}$ solution of \eqref{P}, the following estimate holds true: 
		\begin{equation}\label{eq:Lambda}
			\|u\|_{L^\infty(B)}\le K_\infty\max\left\{1,\frac{1}{d^{\Lambda_q}}\right\},\quad \Lambda_q:=\begin{cases}\frac{2^*_n}{2^*_n-2(q-1)}&\mbox{ if }2^*_n<\infty\\1&\mbox{ otherwise.}\end{cases}
			\end{equation}
		\end{theorem}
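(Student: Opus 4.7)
The plan is to run a Moser iteration while tracking the dependence on the diffusion coefficient $d$ explicitly. The starting point is the weak form of \eqref{P} tested against $u^{2\beta+1}$ (with a truncation $\min\{u,M\}^{2\beta+1}$ to ensure integrability, letting $M\to\infty$ at the end). Applying the classical algebraic inequality
$$(a-b)(a^{2\beta+1}-b^{2\beta+1})\ge\frac{2\beta+1}{(\beta+1)^2}\bigl(a^{\beta+1}-b^{\beta+1}\bigr)^2$$
to the nonlocal bilinear form associated with $(-\Delta)^s$ under the Neumann condition $\mathcal N_s u=0$ yields, setting $w:=u^{\beta+1}$,
$$\frac{d(2\beta+1)}{(\beta+1)^2}[w]^{2}_{s,B}+\|w\|^{2}_{L^{2}(B)}\le\int_{B}u^{q+2\beta}\,dx,$$
where $[\cdot]_{s,B}$ is the Gagliardo-type seminorm of the Neumann quadratic form. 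Combining with the fractional Sobolev embedding $\|w\|^{2}_{L^{2^*_n}(B)}\le C\bigl([w]^{2}_{s,B}+\|w\|^{2}_{L^{2}(B)}\bigr)$ in $H^{s}_{B,0}$ produces
$$\|w\|^{2}_{L^{2^*_n}(B)}\le C(\beta+1)\max\{1,1/d\}\int_{B}u^{q+2\beta}\,dx.$$

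The decisive step is the H\"older estimate of the right-hand side. I split $u^{q+2\beta}=u^{2(q-1)}\cdot u^{2\beta+2-q}$ and apply H\"older's inequality with conjugate exponents $p=\tfrac{2^*_n}{2(q-1)}$ and $p'=\Lambda_q=\tfrac{2^*_n}{2^*_n-2(q-1)}$. The subcriticality hypothesis $q<(2^*_n+2)/2$ is exactly the condition $p>1$, and its conjugate $p'$ is precisely $\Lambda_q$, which explains the appearance of the exponent from the statement. This gives
$$\int_{B}u^{q+2\beta}\,dx\le\|u\|^{2(q-1)}_{L^{2^*_n}(B)}\,\|u\|^{2\beta+2-q}_{L^{(2\beta+2-q)\Lambda_q}(B)}.$$

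The resulting iterative inequality bounds $\|u\|_{L^{2^*_n(\beta+1)}(B)}$ by $\|u\|_{L^{(2\beta+2-q)\Lambda_q}(B)}$ up to a multiplicative constant of the form $C(\beta+1)\max\{1,1/d\}\,\|u\|^{2(q-1)}_{L^{2^*_n}(B)}$. I then choose a sequence $\{\beta_k\}$ so that the RHS integrability exponent at step $k+1$ matches the LHS one at step $k$, iterate, and sum the resulting geometric-type series both in the exponents and in the multiplicative constants. This produces an estimate of the form
$$\|u\|_{L^\infty(B)}\le K_1\,\max\{1,1/d^{\Lambda_q}\}\,\|u\|^{\gamma}_{L^{2^*_n}(B)}$$
for an explicit $\gamma$ depending on $n,s,q$. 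Finally the $L^{2^*_n}$ factor is absorbed by combining the Sobolev embedding with the basic energy identity obtained by testing \eqref{P} with $u$ itself (which controls $[u]^2_{s,B}+\|u\|^2_{L^2(B)}$ in terms of $\int_B u^q$), yielding the claimed bound with $K_\infty=K_\infty(n,s,q)$.

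I expect the main difficulty to lie in the iterative step: verifying that the sequence of integrability exponents grows geometrically to infinity (which is delicate for $q$ close to $(2^*_n+2)/2$, in particular in the regime $q\ge 2^*_n/2$ where an auxiliary bootstrap may be needed to reach a sufficiently high starting exponent before running the main iteration), and then showing that the telescoping of the constants $[C(\beta_k+1)\max\{1,1/d\}]^{1/p_k}$ converges and gives exactly the power $\Lambda_q$ of $\max\{1,1/d\}$ with no residual $L^p$-norms of $u$. The endpoint case $2^*_n=\infty$ is handled separately and more easily, since $H^s_{B,0}\hookrightarrow L^p$ for every finite $p$ and the exponent $\Lambda_q=1$ emerges from a single-step argument.
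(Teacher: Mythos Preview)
Your outline has the right overall architecture (Moser iteration plus a closing energy estimate), but there are two genuine gaps.

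The more serious one is the final absorption step. Testing \eqref{P} with $u$ gives $d[u]^2_{H^s_{B,0}}+\|u\|^2_{L^2(B)}=\int_B u^q\,dx$, hence via Sobolev $\|u\|^2_{L^{2^*_n}(B)}\le C\max\{1,1/d\}\int_B u^q\,dx$. But $\int_B u^q$ is not yet controlled: if you bound it by a power of $\|u\|_{L^\infty}$ or $\|u\|_{L^{2^*_n}}$ you either obtain a useless lower bound (since $q>2$) or reintroduce extra $u$- and $d$-dependence that spoils the exponent $\Lambda_q$. The key idea you are missing is to test \eqref{P} with the constant $v\equiv 1$. By the Neumann condition this kills the nonlocal term and yields $\int_B u\,dx=\int_B u^{q-1}\,dx$; a level-set splitting then gives $\int_B u^{q-1}\,dx\le K$ with $K$ \emph{independent of $d$}. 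Only with this $d$-free bound can one write $\int_B u^q\le\|u\|_{L^\infty}\int_B u^{q-1}\le K\|u\|_{L^\infty}$, feed it back into the iteration output, and close by self-absorption (using $\gamma_q/2<1$, which is exactly the hypothesis $q<(2^*_n+2)/2$). The exponent $\Lambda_q$ in the paper arises precisely from this combination, not from the iteration alone.

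The second gap is in the iteration itself. Your H\"older split $u^{q+2\beta}=u^{2(q-1)}\cdot u^{2\beta+2-q}$ with exponents $2^*_n/(2(q-1))$ and $\Lambda_q$ leads to the recursion $\gamma_{k}=r\gamma_{k-1}+q/2$ for $\gamma_k=\beta_k+1$, where $r=(2^*_n+2)/2-q$. For $q>2^*_n/2$ one has $0<r<1$, so $\gamma_k$ converges to the finite value $q/(2q-2^*_n)$ and the iteration stalls short of $L^\infty$. You flag this but give no remedy, and none is apparent within your splitting. The paper instead factors out only $u^{q-2}$ (in $L^{2^*_n/(q-2)}$) from $\varphi(u)^2 u^{q-2}$; this yields the geometric recursion $\beta_m=b^m$ with $b=(2^*_n-q+2)/2>1$ for every $q<2^*_n$, so the iteration always reaches $L^\infty$ and delivers a $d$-exponent $\delta_q=1/(2^*_n-q)$, which is then upgraded to $\Lambda_q$ via the closing step described above.
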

The definition of solution and the functional space $H^s_{B,0}$ are the natural ones for problem \eqref{P} and will be formally introduced in Section \ref{S:Prelim}.

\begin{remark}\label{rem1}
		Some comments on Theorem \ref{thm:main-nonexistence} are in order.
\begin{itemize} 
	\item[(i)] The value of $d^*$ can be given explicitly in terms of the $L^\infty$-bound, in the following way:
\begin{equation}
\label{eq:d*}
d^*:=\max\left\{1,\frac{(q-1)K_\infty^{q-2}-1}{\lambda_2}\right\}.
\end{equation}
%
\item[(ii)] In Theorem \ref{thm:main-nonexistence}, we have considered the nonlinearity $u^{q-1}$, just for the sake of simplicity. The same result holds true for every nonlinearity $f(u)$ satisfying (see conditions (A1)-(A3) in \cite{NT}): 
\begin{enumerate}
	\item[1.] $f\in C^1([0,+\infty))$, 
	\item[2.] there exist positive constants $a_0,\,a$, and $2<q_0<q<\frac{2^*_n +2}{2}$, such that $$a_0u^{q_0-1}\le f(u)\le a u^{q-1}\quad \mbox{for } u \, \mbox{sufficiently large}.$$
\end{enumerate}
\end{itemize}
	\end{remark}

We can state now our existence result, which extends to the case $0<s\le  1/2$, a result contained in \cite{CC}. 

\begin{theorem}\label{thm:main}
Let $s\in (0,\frac{1}{2}]$, and assume that $q< \frac{2^*_1+2}{2}$.
If $q> 2+d\lambda^+_{2,\mathrm{r}}$, there exists a non-constant radial, radially non-decreasing solution of \eqref{P} which is strictly positive in $B\setminus\{0\}$. 
\end{theorem}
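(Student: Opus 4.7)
My strategy is variational, following the line used in \cite{CC} for $s>1/2$ and addressing the new difficulties that arise when $s\le 1/2$. I would work in the closed convex cone
$$
\mathcal{C} := \{u \in H^s_{B,0} : u \ge 0,\ u \text{ radial and non-decreasing in } |x|\}
$$
and look for non-constant critical points of the energy
$$
J_d(u) := \frac{d}{2}\,[u]_{H^s}^2 + \frac{1}{2}\|u\|_{L^2(B)}^2 - \frac{1}{q}\int_B u^q\,dx
$$
restricted to $\mathcal{C}$. A symmetric-criticality argument combined with a radial non-decreasing rearrangement (which does not increase $J_d$, as established in the nonlocal Neumann setting in \cite{CC}) would guarantee that such critical points are genuine weak solutions of \eqref{P}.

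The constant function $u\equiv 1$ is a critical point of $J_d$ and lies on the Nehari manifold $\mathcal{N}:=\{u\in H^s_{B,0}\setminus\{0\}:\langle J_d'(u),u\rangle=0\}$. Under the hypothesis $q>2+d\lambda_{2,r}^+$, testing the second variation with a radial non-decreasing eigenfunction $\varphi$ associated to $\lambda_{2,r}^+$ yields
$$
\frac{d^2}{dt^2}\Big|_{t=0}J_d(1+t\varphi) \;=\; \bigl(d\lambda_{2,r}^+-(q-2)\bigr)\|\varphi\|_{L^2(B)}^2 \;<\; 0,
$$
so $u\equiv 1$ is not a local minimum of $J_d|_{\mathcal{C}\cap\mathcal{N}}$. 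Since $J_d(tu)\to-\infty$ as $t\to\infty$ for any $u\in\mathcal{C}\setminus\{0\}$ (because $q>2$), a standard mountain-pass or Nehari-minimization argument on $\mathcal{C}$ would then produce a candidate non-constant critical point with energy strictly below $J_d(1)$.

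The main technical step is to bypass the possible super-criticality $q\ge 2^*_n$ when $n\ge 2$, which precludes a direct compactness argument. I would proceed by truncation: replace $u^{q-1}$ by $f_M(u):=u^{q-1}$ on $[0,M]$ extended with subcritical growth for $u>M$. The truncated functional $J_d^M$ satisfies Palais--Smale, and the scheme above yields a critical point $u_M\in\mathcal{C}$. The decisive point is then to establish an $L^\infty$-bound for $u_M$ uniform in $M$. The hypothesis $q<(2^*_1+2)/2$ is precisely the one-dimensional analogue of the threshold $(2^*_n+2)/2$ appearing in Theorem \ref{Linfty-nonex}: since radial, radially non-decreasing functions in $B$ are essentially one-dimensional profiles on $(0,1)$, a Moser-type iteration applied to the radial reduction of \eqref{P}, in the spirit of the proof of Theorem \ref{Linfty-nonex} but carried out in the radial one-dimensional setting, should provide such a bound. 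This uniform estimate is the heart of the extension from $s>1/2$ to $s\le 1/2$ and, I expect, the main obstacle of the proof: the Hölder embedding for radial $H^s$-functions used in \cite{CC} when $s>1/2$ is no longer available, and one must exploit the radial monotone structure together with the sharper one-dimensional subcritical range of $q$.

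Once $\|u_M\|_\infty\le K$ with $K$ independent of $M$, choosing $M>K$ makes $u_M$ a solution of the original problem \eqref{P}. Non-constancy follows from the strict inequality of energy levels ensured by the negative second variation at $u\equiv 1$, while strict positivity on $B\setminus\{0\}$ is a consequence of the radial monotonicity (the solution cannot vanish on a positive-measure annulus) together with the strong maximum principle for $(-\Delta)^s$ with nonlocal Neumann boundary conditions.
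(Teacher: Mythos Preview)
Your proposal is correct and follows essentially the same approach as the paper: truncate the nonlinearity to a subcritical one, establish a uniform $L^\infty$-bound for solutions in the cone $\mathcal{C}$ via a Moser iteration that exploits the one-dimensional Sobolev embedding for radial functions (this is precisely where the hypothesis $q<(2^*_1+2)/2$ enters), run a mountain-pass argument inside $\mathcal{C}$, and then use the second variation at $u\equiv 1$ in the direction of the eigenfunction for $\lambda^+_{2,\mathrm r}$ to rule out constancy. The only point where the paper is more explicit than your sketch is in the non-constancy step: rather than just invoking the negative second variation, the paper builds a concrete admissible path $\bar\gamma(t)=tt_\infty(1+\bar t\varphi_2)$ and checks that its maximal-energy point has the form $(1+o(\bar t))(1+\bar t\varphi_2)$, so that the second-order expansion applies directly to the mountain-pass level; you will need some such construction to turn the local information $J_d''(1)[\varphi,\varphi]<0$ into the global inequality $c<J_d(1)$.
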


Observe that in the previous statement, $2^*_1$ represents the critical Sobolev exponent in dimension $1$, and not in dimension $n$. The interest here, as in \cite{CC}, is proving existence of solutions in the case of a supercritical nonlinearity (i.e. $q>2^*_n$).
We remark that the range $\big(2+d\lambda^+_{2,\mathrm{r}},\frac{2(1-s)}{1-2s}\big)$, in which $q$ can be taken, is certainly not empty when $s$ is in a (sufficiently small) left neighborhood of ${1}/{2}$. In particular, in the limit case $s=1/2$, for $q$ large enough all assumptions are satisfied. Instead, for smaller values of $s$ the range could become empty.
	For simplicity we have stated the result in the unit ball, but one can see that the same statement holds true in a ball of radius $R$ (as in \cite{CC}) with exactly the same conditions on $q$ (replacing $\lambda^+_{2,\mathrm{r}}$ with the analogous eigenvalue $\lambda^+_{2,\mathrm{r}}(B_R)$ in the set $B_R$). Hence,
 when $s$ is close to zero, one can look at Theorem~\ref{thm:main} as an existence result in a sufficiently large ball $B_R$, since (by scaling) $\lambda^+_{2,\mathrm{r}}(B_R)\to 0$ as $R\to\infty$.
 
It is worth mentioning also that, in terms of the diffusion coefficient $d$, Theorem~\ref{thm:main} ensures that if $0<d<d^{**}:=(q-2)/\lambda_{2,\mathrm{r}}^+$, problem \eqref{P} admits a non-constant solution. Since $K_\infty \ge 1$ (see Remark \ref{rem:K^inftyge1} below) and $\lambda_2\le \lambda_{2,\mathrm{r}}^+$, it results $d^{**}<[(q-1)K^{q-2}_\infty-1]/\lambda_2$. Hence, having in mind the definition of $d^*$ given in \eqref{eq:d*}, both if $d^*=1$ and if $d^*= [(q-1)K^{q-2}_\infty-1]/\lambda_2$, we can conclude that $d^{**}<d^*$. So far, we are not able to prove neither that $d^*$ is sharp for non-existence nor that $d^{**}$ is sharp for existence. Nevertheless, for the local case, in \cite[Lemma 2.7 and Theorem 3.5]{BGT} it has been proved via bifurcation techniques that for $d$ sufficiently large (still remaining below the threshold for non-existence) all non-constant radial solutions are radially {\it decreasing}. Therefore, we suspect that also in the nonlocal case, $d^{**}$ should not be optimal even for the existence of non-constant radial solutions, since we expect that in the gap $(d^{**},d^*)$ problem \eqref{P} admits radially decreasing solutions. In the local case, we refer to \cite[Corollary~1.5-(ii)]{BCN} for the existence of a radial, radially decreasing solution when $q$ satisfies $q>2+d\lambda_{2,\mathrm{r}}$ in the subcritical regime, and to the Introduction of the same paper for a discussion on the importance of the subcritical growth for deriving a priori estimates for radially decreasing solutions.
We refer also to the Introduction of \cite{BGT} for some conjectures on the appearance of nonradial solutions in the local case.

\begin{remark}\label{rem2}
Also in the statement of Theorem \ref{thm:main}, a more general nonlinearity can be considered. In particular the same result holds true if $u^{q-1}$ is replaced by a function $f(u)$ satisfying the following conditions (see $(f_1)$-$(f_3)$ in \cite{CC}):
	\begin{itemize}
		\item[$(f_1)$] $f'(0)=\lim_{t\to 0^+}\frac{f(t)}{t}\in (-\infty,1)$;
		\item[$(f_2)$] $\liminf_{t\to\infty}\frac{f(t)}{t}>1$;
		\item[$(f_3)$] there exists a constant $u_0>0$ such that $f(u_0)=u_0$ and $f'(u_0)>d\lambda_2^{+,\mathrm{r}}+1$.
	\end{itemize}
\end{remark}

The supercritical nature of the problem prevents a priori the use of variational methods for proving existence. To overcome this issue, following previous papers (cf. \cite{ST,BNW,CC}), we work in the convex cone of radial $H^s_{B,0}$-functions which are non-negative and radially non-decreasing. Working in this cone allows to get $L^\infty$-a priori estimates of the form $\|u\|_{L^\infty(B)}\le K'_\infty$ for the solutions $u$ belonging to $\mathcal C$ (cf. Theorem \ref{thm:Linftybound'} below) which are uniform over a whole class of problems (see \eqref{Pg}) to which, in particular, \eqref{P} belongs. As a consequence, this allows in turn to modify the nonlinearity $t^{q-1}$ at infinity (say for $t> K'_\infty+1$) into a subcritical $g(t)$ for proving existence via a mountain pass-type argument, cf. \cite[Section 4]{CC}. It is worth mentioning that the same existence result can also be proved when the domain is an annulus of $\mathbb R^n$. Even more, in such a case, beyond the non-decreasing solution, the existence of a non-constant radially non-increasing solution can be proved with the same technique, working in the cone of radially non-increasing functions. 

As a first step for proving a priori estimates, one needs to use continuous embeddings for fractional Sobolev spaces in the radial setting. In particular, when $s>1/2$ one immediately has that radial increasing $H^s$-functions belong to $L^\infty$ (see \cite[Lemma 3.1]{CC}) and can prove directly uniform $L^\infty$-a priori estimates for solutions of this type. 
On the other side, for treating the case $s\le 1/2$, we need to combine the continuous embedding in $L^p$ (for any $p$ in the case $s=1/2$, and for any $p\le 2^*_1$ in the case $s<1/2$) with a Moser iteration argument.

Once that existence is proved, it remains to show that the solution found is not constant. To do so, it is possible to argue as in \cite[Section 5]{CC}, where it is proved that the energy of the constant 1 is strictly higher than the energy of the minimax solution found before. 
The need for the extra assumption $q>2+d\lambda^+_{2,\mathrm{r}}$ required in the statement of Theorem \ref{thm:main} arises exactly at this stage, in the energy comparison.  

The paper is organized as follows. In Section \ref{S:Prelim}, we recall the functional setting, the definition of solution, and some fractional embedding results. In Section \ref{S:est}, we prove $L^\infty$-a priori estimates for solutions of \eqref{P} and the non-existence result stated in Theorem \ref{thm:main-nonexistence}. Finally, Section \ref{S:existence} is devoted to the proof of the existence of a non-constant radial solution of \eqref{P} for $s\le 1/2$, as stated in Theorem \ref{thm:main}.

\section{Preliminaries}\label{S:Prelim}
In this section we define the functional setting of our problem and recall Sobolev's embedding theorems for fractional spaces, which will be useful in the next sections. 

Let $u:\mathbb R^n\to \mathbb R$ be a measurable function. We introduce the seminorm
\begin{equation}\label{seminorm}
[u]_{H^s_{B,0}}:= \left(\frac{c_{n,s}}{2}\iint_{\mathbb R^{2n}\setminus(B^c)^2}\frac{|u(x)-u(y)|^2}{|x-y|^{n+2s}}dx\,dy\right)^{\frac{1}{2}},
\end{equation}
where $B^c$ denotes the complement of $B$. The space
\[
H^s_{B,0}:=\left\{u:\mathbb R^n\to \mathbb R,\,u \in L^2(B)\, :\, [u]_{H^s_{B,0}}<\infty \right\}
\]
is a  Hilbert space equipped with the norm $\|u\|_{H^s_{B,0}}:=\left(\|u\|_{L^2(B)}^2+[u]_{H^s_{B,0}}^2\right)^{\frac{1}{2}}$.

As mentioned in the Introduction, the Neumann condition \eqref{Neu} is the natural notion to have a variational structure in this setting, indeed, as proven in \cite[Section~3]{DROV}, the following integration by parts formula holds true for all bounded $C^2(\mathbb R^n)$-functions $u,\,v$ :
\begin{equation}\label{eq:int-by-parts}
\begin{split}&\frac{c_{n,s}}{2}\iint_{\mathbb{R}^{2n}\setminus(B^c)^2}\frac{(u(x)-u(y))(v(x)-v(y))}{|x-y|^{n+2s}}\,dx\,dy\\
	&\hspace{6em} = \int_B v(-\Delta)^s u\,dx + \int_{B^c} v \mathcal N_s u\,dx.
\end{split}
\end{equation}

We recall the formal definition of weak solution for problems of the form
\begin{equation}\label{P-generico}
\begin{cases}
d(-\Delta)^s u+u=f(u)\quad&\mbox{in }B,\\
u\ge 0\quad&\mbox{in }B,\\
\mathcal N_s u=0\quad&\mbox{in }\mathbb R^n\setminus \overline B.
\end{cases}
\end{equation}
A non-negative function $u\in H^s_{B,0}$ is a \textit{weak solution} of \eqref{P-generico} if for every $v\in H^s_{B,0}$ it holds
\begin{equation}\label{eq:distr-id}
d\frac{c_{n,s}}{2} \iint_{\mathbb R^{2n}\setminus(B^c)^2}\frac{(u(x)-u(y))(v(x)-v(y))}{|x-y|^{n+2s}}\,dx\,dy + \int_B u\, v\,dx=\int_B  f(u)\,v\, dx.
\end{equation}
Clearly, this definition makes sense when $\int_B  f(u)\,v \,dx$ is finite for every test function $v$. 
For brevity, throughout the paper, we will omit the term \textit{weak} and we will call \textit{solution} of \eqref{P-generico} any function satisfying the distributional identity \eqref{eq:distr-id}. Moreover, when it will be clear from the context, we will avoid distributional identities/inequalities, using directly their strong form: we will write simply $(-\Delta)^s u\le f$ meaning that the corresponding distributional inequality holds with all non-negative test functions.

As for fractional embeddings, we first recall that the following inequality holds between the usual $H^s$-seminorm and our seminorm $[\,\cdot\,]_{H^s_{B,0}}$, by monotonicity of the integrals with respect to domain inclusion
\[
[u]_{H^s_{B,0}}\ge [u]_{H^{s}(B)}.
\] 
Thus, as an easy consequence of the fractional embeddings $H^s(B)\hookrightarrow L^p(B)$ (see for example Section 7 in \cite{Adams} and remind that $H^s(B)=W^{s,2}(B)$), we have the following result.

\begin{proposition}\label{embedding}
If $n\neq 2s$, the space $H^s_{B,0}$ is continuously embedded in $L^p(B)$ for every $p\in[1,2_n^*]$. If $n= 2s$ the same continuous embedding holds for  $p\in[1,+\infty)$.
\end{proposition}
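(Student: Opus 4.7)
The proof should be essentially immediate from what the authors have already pointed out right before the statement, so my plan is just to assemble the pieces carefully.

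First, I would observe that since the integration domain $\mathbb R^{2n}\setminus (B^c)^2$ contains $B\times B$, monotonicity of the integral of a non-negative integrand with respect to the domain yields
\[
[u]_{H^s_{B,0}}^2 \;=\; \frac{c_{n,s}}{2}\iint_{\mathbb R^{2n}\setminus (B^c)^2}\frac{|u(x)-u(y)|^2}{|x-y|^{n+2s}}\,dx\,dy \;\ge\; \frac{c_{n,s}}{2}\iint_{B\times B}\frac{|u(x)-u(y)|^2}{|x-y|^{n+2s}}\,dx\,dy,
\]
which, up to the multiplicative constant $c_{n,s}/2$ that is absorbed into the equivalence of norms, is exactly $[u]_{H^s(B)}^2$. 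Adding $\|u\|_{L^2(B)}^2$ to both sides, this gives $\|u|_B\|_{H^s(B)} \le C\|u\|_{H^s_{B,0}}$ for some constant $C=C(n,s)$, i.e.\ the restriction map $u\mapsto u|_B$ is a bounded linear operator $H^s_{B,0} \to H^s(B)$.

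Next, I would invoke the classical fractional Sobolev embedding theorem on the Lipschitz domain $B$: since $H^s(B)=W^{s,2}(B)$ and $B$ has (Lipschitz, hence) the extension property, one has the continuous embeddings $H^s(B)\hookrightarrow L^p(B)$ for every $p\in[1,2_n^*]$ when $n\neq 2s$, and for every $p\in[1,+\infty)$ when $n=2s$, as proved in \cite{Adams}. Composing with the restriction map above yields the desired continuous embedding $H^s_{B,0}\hookrightarrow L^p(B)$ in the corresponding range of $p$.

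There is no real obstacle here: the statement is a direct corollary of the known $H^s(B)$-embeddings combined with the elementary pointwise inequality between seminorms. The only thing worth being careful about is that the functions in $H^s_{B,0}$ are defined on all of $\mathbb R^n$ and their $L^p$-norm is evaluated only on $B$, so one should explicitly state that the embedding is realized through the restriction operator; no extension argument is needed in this direction.
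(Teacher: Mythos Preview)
Your proposal is correct and follows exactly the approach the paper indicates: the authors state the seminorm inequality $[u]_{H^s_{B,0}}\ge [u]_{H^s(B)}$ by domain monotonicity and then invoke the classical embedding $H^s(B)\hookrightarrow L^p(B)$ from \cite{Adams}, which is precisely what you do (with a few extra clarifying remarks about the restriction map and the Lipschitz regularity of $B$).
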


As already mentioned in the Introduction, to gain compactness in our supercritical setting, we work with radial functions, in the following cone $\mathcal C$ made of functions sharing the same symmetry, monotonicity, and sign conditions as the solution we are going to detect:
\begin{equation}\label{cone}
\mathcal C:=\left\{u\in H^s_{B,0}\,:\, \begin{aligned}& u=u(r)\mbox{ is radial, }u\ge0\mbox{ a.e. in }\mathbb R^n,\\& u(r)\le  u(s) \mbox{ if $0\le r\le s\le 1$} \end{aligned}\right\},
\end{equation}
where, with abuse of notation, we denote by the same letter $u$ the function in $H^s_{B,0}$ and its radial profile, writing $u(|x|)=u(x)$.

Working with radial functions allows to improve the embedding results, increasing the critical exponent up to $2^*_1$, which is critical for the one-dimensional case. 

Throughout the paper, we denote by $B_R$ the ball of radius $R$ centered at the origin and we omit the radius only for denoting the unit ball. 

\begin{lemma}\label{immersion}
Let $u$ be a radial function in $H^s_{B,0}$. Then,  
\begin{itemize}
\item[(i)] if $s=\frac{1}{2}$, $\left\|u\right\|_{L^p(B\setminus B_{1/2})}\le C\|u\|_{H^s_{B,0}}$ for every $p \in [1,\infty)$;
\item[(ii)] if $0<s<\frac{1}{2}$, $\left\|u\right\|_{L^p(B\setminus B_{1/2})}\le C\|u\|_{H^s_{B,0}}$ for every $p\in [1,2^*_1]$,
\end{itemize}
for some positive constant $C$ depending only on $n,\,s$, and $p$.
As a consequence, if $u$ belongs to the cone $\mathcal C$ of non-negative, radial, radially non-decreasing functions 
introduced in \eqref{cone}, the following inequalities hold:
\begin{itemize}
\item[(iii)] if $s=\frac{1}{2}$, $\left\|u\right\|_{L^p(B)}\le C\|u\|_{H^s_{B,0}}$ for every $p \in [1,\infty)$;
\item[(iv)] if $0<s<\frac{1}{2}$, $\left\|u\right\|_{L^p(B)}\le C\|u\|_{H^s_{B,0}}$ for every $p\in [1,2^*_1]$
\end{itemize}
for some positive constant $C$ depending only on $n,\,s$, and $p$.
\end{lemma}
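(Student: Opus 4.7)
The plan is to reduce parts (i)-(ii) to a one-dimensional fractional Sobolev embedding via polar coordinates and then deduce (iii)-(iv) from the monotonicity built into the cone $\mathcal C$. Throughout, the parameter region $r,\rho\in(1/2,1)$ is what makes the argument clean: the Jacobian weight $r^{n-1}\rho^{n-1}$ is bounded above and below by positive constants, so norms of the radial profile on the interval $(1/2,1)$ are equivalent to norms of $u$ on the annulus $B\setminus B_{1/2}$.

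First I would write $x=r\omega$, $y=\rho\sigma$ with $(r,\rho)\in(1/2,1)^2$ and $(\omega,\sigma)\in S^{n-1}\times S^{n-1}$, and denote by $\tilde u$ the radial profile of $u$. On the $L^p$ side one has immediately
\[
\|u\|_{L^p(B\setminus B_{1/2})}^p \;=\; |S^{n-1}|\int_{1/2}^1 \tilde u(r)^p r^{n-1}\,dr \;\asymp\; \|\tilde u\|_{L^p(1/2,1)}^p .
\]
For the seminorm, I would use $[u]_{H^s_{B,0}}\ge [u]_{H^s(B)}$ (recalled just before the lemma), restrict the double integral to the annulus, and integrate out the angular variables to obtain
\[
[u]_{H^s_{B,0}}^2 \;\ge\; \tfrac{c_{n,s}}{2}\int_{1/2}^1\!\int_{1/2}^1 |\tilde u(r)-\tilde u(\rho)|^2\, r^{n-1}\rho^{n-1}\, J(r,\rho)\,dr\,d\rho,
\]
where $J(r,\rho):=\iint_{S^{n-1}\times S^{n-1}}|r\omega-\rho\sigma|^{-(n+2s)}\,d\omega\,d\sigma$.

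The key technical step, and the one I expect to be the main obstacle, is the lower bound $J(r,\rho)\ge c\,|r-\rho|^{-(1+2s)}$ for $r,\rho\in(1/2,1)$. For $n=1$ this is trivial by direct evaluation on the two-point ``sphere'' $\{\pm 1\}$. For $n\ge 2$, after freezing $\sigma=e_1$ by rotational invariance and introducing $t=\omega\cdot e_1$, one is reduced to
\[
J(r,\rho) \;=\; c_n\int_{-1}^{1}\frac{(1-t^2)^{(n-3)/2}}{(r^2+\rho^2-2r\rho\,t)^{(n+2s)/2}}\,dt.
\]
Localizing near $t=1$ and changing variables $1-t=\frac{(r-\rho)^2}{2r\rho}\,v$ (using $r\rho\ge 1/4$) yields the desired scaling $|r-\rho|^{-(1+2s)}$, with the integral in $v$ converging at both endpoints precisely because $(n-3)/2 > -1$ and $(n+2s)/2 > (n-1)/2$. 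Combining this estimate with the boundedness of $r^{n-1}\rho^{n-1}$ on $(1/2,1)^2$ gives
\[
[\tilde u]_{H^s(1/2,1)}^2 \;\le\; C\,[u]_{H^s_{B,0}}^2 ,
\]
and together with the $L^2$-control already observed, $\|\tilde u\|_{H^s(1/2,1)}\le C\|u\|_{H^s_{B,0}}$. The one-dimensional fractional Sobolev embedding $H^s(1/2,1)\hookrightarrow L^p$ (valid for all $p<\infty$ when $s=1/2$ and for $p\le 2^*_1=\frac{2}{1-2s}$ when $s<1/2$) then delivers (i) and (ii).

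Finally, for (iii)-(iv) I would exploit that $u\in\mathcal C$ is non-negative and radially non-decreasing, so for almost every $x\in B_{1/2}$ and every $y\in B\setminus B_{1/2}$ one has $u(x)\le u(y)$. Averaging in $y$ yields
\[
u(x)^p \;\le\; \frac{1}{|B\setminus B_{1/2}|}\int_{B\setminus B_{1/2}} u(y)^p\,dy,
\]
hence $\|u\|_{L^\infty(B_{1/2})}\le C\|u\|_{L^p(B\setminus B_{1/2})}$, and therefore
\[
\|u\|_{L^p(B)}^p \;\le\; \|u\|_{L^p(B\setminus B_{1/2})}^p + |B_{1/2}|\,\|u\|_{L^\infty(B_{1/2})}^p \;\le\; C'\|u\|_{L^p(B\setminus B_{1/2})}^p .
\]
Combining with (i)-(ii) finishes (iii)-(iv). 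The only genuinely delicate point is the spherical-integral lower bound $J(r,\rho)\gtrsim |r-\rho|^{-(1+2s)}$; everything else is bookkeeping plus a standard one-dimensional embedding.
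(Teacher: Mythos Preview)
Your proof is correct and follows essentially the same route as the paper: both reduce to the one-dimensional embedding $H^s(1/2,1)\hookrightarrow L^p$ via polar coordinates and then pass from the annulus to the full ball using the radial monotonicity in $\mathcal C$. The paper defers the key lower bound $J(r,\rho)\gtrsim |r-\rho|^{-(1+2s)}$ to \cite[Lemma~4.3]{BSY}, whereas you supply a direct (and correct) computation of it; otherwise the arguments are the same.
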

\begin{proof}
The proofs of (i) and (ii) are the same as in \cite[Lemma 4.3]{BSY} with the only natural changes due to the different boundary conditions. We report here only a sketch of the proof of (ii) to highlight the differences. Arguing as in \cite[formula (4.3)]{BSY}, we can estimate
\[
[u]_{H^s_{B,0}}^2\ge \frac{c_{n,s}}{2}\iint_{B^2}\frac{|u(x)-u(y)|^2}{|x-y|^{N+2s}}\,dx dy \ge C\int_{\frac{1}{2}}^1\int_{\frac{1}{2}}^1\frac{|u(\varrho)-u(r)|^2}{|\varrho-r|^{1+2s}}\,d\varrho dr.
\]
Here and in the rest of the proof, $C$ denotes different positive constants whose exact values are not important.
Thus, using the radial symmetry and the Sobolev embedding in dimension 1 in the interval $(1/2,1)$, and denoting by $\mathcal S$ the corresponding best Sobolev constant, we get
\[
\begin{aligned}
\|u\|_{H^s_{B,0}}^2 &\ge C [u]_{H^s(1/2,1)}^2+\frac{\omega_{N-1}}{2^{N-1}}\int_{\frac{1}{2}}^1 u^2\, d\varrho\\ &\ge \frac{C}{\mathcal S^2}\left(\int_{\frac{1}{2}}^1 u^{\frac{2}{1-2s}}\, d\varrho\right)^{1-2s}\ge \frac{C}{\mathcal S^2 \omega_{N-1}^{1-2s}}\|u\|^2_{L^{\frac{2}{1-2s}}(B\setminus B_{1/2})}. 
\end{aligned}
\]
The proofs of (iii) and (iv) then follow immediately using the radial monotonicity of functions in $\mathcal C$, being for every $p$ as in the statement
\[
\int_{B_{1/2}}|u|^p dx\le |B_{1/2}||u(1/2)|^p \le \frac{|B_{1/2}|}{|B\setminus B_{1/2}|} \int_{B\setminus B_{1/2}}|u|^p dx.
\]
\end{proof}

We remark that a similar result holds also for $s>1/2$, cf. \cite[Lemma 3.1]{CC} and \cite[Lemma 4.3]{BSY}.

Finally, for future use, we recall here the definitions of the following types of second eigenvalue of the fractional Neumann Laplacian in $B$. Denoting by 
\[
\begin{gathered}
H^s_\mathrm{r}:=\{u\in H^s_{B,0}\,:\, u\mbox{ radial }\}\quad\mbox{and}\\
H^{s,+}_\mathrm{r}:=\{u\in H^s_{B,0}\,:\, u\mbox{ radial and radially non-decreasing }\},
\end{gathered}
\]
we introduce
\begin{equation}\label{lambda2rad+}
\begin{aligned}
\lambda_2&:=\inf_{v\in H^s_{B,0},\,\int v=0}\frac{[v]^2_{H^s_{B,0}}}{\int_{B}v^2}\\
& \le\lambda_{2,\mathrm{r}}:=\inf_{v\in H^s_\mathrm{r},\,\int v=0}\frac{[v]^2_{H^s_{B,0}}}{\int_{B}v^2}\le \lambda^+_{2,\mathrm{r}}:=\inf_{v\in H^{s,+}_\mathrm{r},\,  \int v=0}\frac{[v]^2_{H^s_{B,0}}}{\int_{B}v^2},
\end{aligned}
\end{equation}
where the inequalities follow easily from the inclusions $H^{s,+}_{\mathrm{r}}\subset H^s_\mathrm{r}\subset H^s_{B,0}$ and, by the direct method of Calculus of Variations, all these infima are achieved.

\section{$L^\infty$-a priori estimates and non-existence in the subcritical regime}\label{S:est}
In this section we prove Theorem \ref{thm:main-nonexistence}. 
We start with an $L^{2^*_n}$-$L^\infty$ estimate, which is the crucial ingredient in the proof of the $L^\infty$-a priori bound.

\begin{lemma}\label{lem:L2*Linfty-n}
	Let $q>2$.
	If $2^*_n<\infty$, then, provided that $q < 2^*_n$, there exists a constant $K_q=K_q(s,n,q)$, which depends only on $s,\,n$, and $q$, such that
	\begin{equation}\label{L2*Linfty-n}
	\begin{gathered}	\|u\|_{L^\infty(B)}\le K_q\max\left\{1, \frac{1}{d}\right\}^{\delta_q} \left(1+\|u\|^{\gamma_q}_{L^{2^*_n}(B)}\right)\\
		\mbox{ with }\quad
		\delta_q:=
	\frac{1}{2^*_n-q}
	\quad\mbox{ and } \quad
	\gamma_q:=\frac{2^*_n-2}{2^*_n-q}
	\end{gathered}
	\end{equation}

	for every $u\in H^s_{B,0}$ solution of \eqref{P}.

	If $2^*_n=\infty$, for every $p>q$ there exists a constant $K'_{q,p}=K_{q,p}(s,n,q,p)$, which depends only on $s,\,n,\,q$, and $p$, such that
	\begin{equation}\label{L2*Linfty-1/2-n}
\begin{gathered}
\|u\|_{L^\infty(B)}\le K_{q,p} \max\left\{1,\frac{1}{{d}}\right\}^{\delta_{q,p}}\left(1+\|u\|^{\gamma_{q,p}}_{L^p(B)}\right)\\\mbox{ with }\quad \delta_{q,p}:=\frac{1}{p-q}	\quad\mbox{ and } \quad
\gamma_{q,p}:=\frac{p-2}{p-q}.
\end{gathered}
	\end{equation}
\end{lemma}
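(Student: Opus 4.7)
The natural approach is a Moser iteration tailored to the nonlocal Neumann formulation, carefully tracking the dependence on $d$. I would fix $\beta\ge 1$ and test the distributional identity \eqref{eq:distr-id} with the power $v=u^{2\beta-1}$ (after the standard truncation $u\wedge M$ to ensure admissibility, then letting $M\to\infty$). The classical pointwise inequality
\[
(a-b)(a^{2\beta-1}-b^{2\beta-1})\;\ge\;\frac{2\beta-1}{\beta^2}(a^\beta-b^\beta)^2,\qquad a,b\ge 0,
\]
bounds the nonlocal quadratic form from below by $\tfrac{2\beta-1}{\beta^2}[u^\beta]_{H^s_{B,0}}^2$. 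Together with the $L^2$ term on the left of \eqref{eq:distr-id}, and handling the regimes $d\ge 1$ and $d<1$ by factoring out $\max\{1,1/d\}$, one arrives at
\[
\|u^\beta\|_{H^s_{B,0}}^2 \;\le\; C\beta\,\max\{1,1/d\}\int_B u^{q+2\beta-2}\,dx.
\]
The embedding in Proposition \ref{embedding} (with exponent $2^*_n$ when $2^*_n<\infty$, or any fixed finite $p>q$ when $2^*_n=\infty$), applied to $u^\beta$, then converts this into the basic one-step recursive inequality
\[
\|u\|_{L^{2^*_n\beta}(B)}\;\le\;\bigl(C\beta\max\{1,1/d\}\bigr)^{1/(2\beta)}\|u\|_{L^{q+2\beta-2}(B)}^{(q+2\beta-2)/(2\beta)}.
\]

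With this at hand, I would iterate starting from $p_0:=2^*_n$ (or $p_0:=p$ in the second case), choosing $\beta_k:=(p_k-q+2)/2$ and $p_{k+1}:=2^*_n\beta_k$ so that the right-hand exponent at step $k{+}1$ matches the left-hand exponent at step $k$. The linear recursion $p_{k+1}=\chi p_k-\chi(q-2)$ with $\chi:=2^*_n/2>1$ (since $q<2^*_n$) grows geometrically to infinity, whence $\|u\|_{L^{p_k}(B)}\to\|u\|_{L^\infty(B)}$ by the standard exit argument for Moser iterations. Composing the one-step inequality along $(p_k)$ produces an estimate of the form
\[
\|u\|_{L^\infty(B)}\;\le\; K\,\max\{1,1/d\}^{\delta}\,\|u\|_{L^{p_0}(B)}^{\gamma},
\]
for exponents $\delta,\gamma$ arising from convergent series/products of the iteration weights; replacing $\|u\|_{L^{p_0}}^{\gamma}$ by $1+\|u\|_{L^{p_0}}^{\gamma}$ absorbs the case of small $L^{p_0}$ norms and yields the shape stated in \eqref{L2*Linfty-n} and \eqref{L2*Linfty-1/2-n}.

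\textbf{Main obstacle.} The only genuinely non-routine point is identifying the exponents produced by the iteration as the explicit $\delta_q=1/(2^*_n-q)$ and $\gamma_q=(2^*_n-2)/(2^*_n-q)$ (with the natural analogues in the $2^*_n=\infty$ case). Setting $r_k:=p_k/(2\beta_k)=\chi p_k/p_{k+1}$, the product $\prod_{k=0}^n r_k$ telescopes to $\chi^{n+1}p_0/p_{n+1}$, which converges to $p_0/(p_0-a)$, where $a=\chi(q-2)/(\chi-1)$ is the fixed point of the recursion; a direct simplification yields exactly $\gamma_q$. A similar (slightly more delicate) telescoping applied to $\sum_k (1/(2\beta_k))\prod_{j>k}r_j$ produces $\delta_q$. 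The combinatorial factors $(C\beta_k)^{1/(2\beta_k)\prod_{j>k}r_j}$ are harmless since $\beta_k^{1/\beta_k}$ remains bounded and the weights decay geometrically, so they can be absorbed into the universal constant $K_q$ (respectively $K_{q,p}$). The case $2^*_n=\infty$ poses no additional difficulty because Proposition \ref{embedding} then allows $p_0>q$ to be chosen freely.
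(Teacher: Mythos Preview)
Your proposal is correct and follows the same Moser-iteration strategy as the paper. The paper organizes the iteration slightly differently: instead of testing with $u^{2\beta-1}$ and invoking the algebraic inequality, it tests with a truncated power $\varphi(u)\approx u^\beta$ and uses the convexity inequality $(-\Delta)^s\varphi(u)\le\varphi'(u)(-\Delta)^s u$; and rather than your direct recursion $p_{k+1}=\chi p_k-\chi(q-2)$, it applies H\"older at each step to split off a \emph{fixed} factor $(1+\|u\|_{L^{2^*_n}}^{q-2})$, reducing the iteration to a purely geometric one with ratio $b=(2^*_n-q+2)/2$, so that the exponents emerge immediately from $\sum_{k\ge1}b^{-k}=2/(2^*_n-q)$ without the telescoping computation you outlined. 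One small slip: $\chi=2^*_n/2>1$ holds because $2^*_n>2$, not because $q<2^*_n$; the hypothesis $q<2^*_n$ is what guarantees $p_0$ lies above the fixed point $a$, hence that the $p_k$ increase to infinity.
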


\begin{proof} 
	The proof is based on the Moser iteration technique, in a version that has been adapted to the fractional setting, cf. \cite{BCSS}.
		We first consider the case $2_n^*<\infty$.
		For $T>0$ and $\beta >1$, we define the following differentiable and convex function:
		\begin{equation}
		\label{eq:varphi}
		\varphi(t)=\varphi_{T,\beta}(t)=
		\begin{cases} 0 \quad & t\le 0\\
		t^\beta & 0<t<T \\
		\beta T^{\beta-1}(t-T) + T^\beta & t\ge T.
		\end{cases}
		\end{equation}
		Observe that $\varphi$ is a Lipschitz function with Lipschitz constant $\beta T^{\beta-1}$. In particular, if $u$ is a solution of \eqref{P}, $\varphi(u)$ and $\varphi(u)\varphi'(u)$ are non-negative functions belonging to $H^s_{B,0}$.
		Moreover, observe that, since $\varphi$ is convex, we have (cf. \cite[Proposition 4]{LPPS})
		\begin{equation}\label{Delta-phi}
		(-\Delta)^s \varphi(u) \leq \varphi'(u)(-\Delta)^s u \quad\mbox{and}\quad 
		\mathcal N_s \varphi(u) \leq \varphi'(u)\mathcal N_s u,
		\end{equation}
		where the inequalities are meant in the distributional sense. Hence, being $\varphi(u)\in H^s_{B,0}$, by Sobolev's embedding -Proposition \ref{embedding}- and using the $0$-Neumann condition and the second inequality in \eqref{Delta-phi}, we get 
		\begin{equation}\label{lower}
		\begin{split}
		\int_B \varphi(u) &(d(-\Delta)^s \varphi(u) + \varphi(u))dx \\ 
		&= d[\varphi(u)]^2_{H^s_{B,0}} +  \int_B \varphi(u)^2 dx - d\int_{B^c} \varphi(u)\mathcal N_s(\varphi(u))dx \\
		 &\ge \min\{1,d\} \|\varphi(u)\|^2_{H^s_{B,0}} \ge \frac{\min\{1,d\}}{C^2} \|\varphi(u)\|^2_{L^{2_n^*}(B)}.
		\end{split}
		\end{equation}
		On the other hand, using the first inequality in \eqref{Delta-phi} and that $u$ is a solution of \eqref{P}, we have
		\begin{equation}\label{upper}
		\begin{split}
		\int_B \varphi(u)&(d(-\Delta)^s \varphi(u)+\varphi(u))\, dx \\
		&\le \int_B \varphi(u)\varphi'(u)d(-\Delta)^s u\,dx + \int_B \varphi(u)^2 \,dx \\
		 &\le \int_B \varphi(u)\varphi'(u) u^{q-1}\,dx + \int_B \varphi(u)^2 \,dx.
		\end{split}
		\end{equation}
		Combining together \eqref{lower} and \eqref{upper}, we deduce that
		\[
		\int_B \varphi(u)\varphi'(u) u^{q-1}\,dx + \int_B \varphi(u)^2 \,dx \ge \frac{\min\{1,d\}}{C^2} \|\varphi(u)\|^2_{L^{2_n^*}(B)},
		\]
		whence, using that $t\varphi'(t)\le \beta\varphi(t)$ and H\"older's inequality, we obtain
		\begin{equation}\label{eq:est-varphiu}
		\begin{aligned}
		&\|\varphi(u)\|^2_{L^{2_n^*}(B)}\\
		&\,\le \beta C^2\max\left\{1,\frac{1}{d}\right\}\left(\int_B \varphi(u)^2u^{q-2}\,dx+\int_B \varphi(u)^2 \,dx\right)\\
		&\,\le \beta C^2\max\left\{1,\frac{1}{d}\right\}\max\left\{1,|B|^{\frac{q-2}{2^*_n}}\right\}\left(1+\|u\|_{L^{2^*_n}(B)}^{q-2}\right)\left(\int_B \varphi(u)^{\frac{2^*_n}{b}}\,dx\right)^{\frac{2b}{2^*_n}}\!\!,
		\end{aligned}
		\end{equation}  
		where  $b:=\frac{2^*_n-q+2}{2}>1$. Since $u\in L^{2^*_n}(B)$ by Proposition \ref{embedding}, for $\beta=b$ we are allowed to let $T\to \infty$ in \eqref{eq:est-varphiu}. This leads to
	\[
	\|u\|_{L^{b2_n^*}(B)}\le b^{\frac{1}{2b}} \left[C\max\left\{1,\frac{1}{\sqrt{d}}\right\}\max\left\{1,|B|^{\frac{1}{2}\frac{q-2}{2^*_n}}\right\}\left(1+\|u\|_{L^{2^*_n}(B)}^{\frac{q-2}{2}}\right)\right]^{\frac{1}{b}}\|u\|_{L^{2^*_n}(B)}.
	\]
	We now set $\beta_m:=b\beta_{m-1}=b^m$ for every $m\in\mathbb N$, iterate the argument, and obtain the recurrence formula
	\[
	\|u\|_{L^{\beta_{m}2_n^*}(B)}\le \left(C_q \max\left\{1,\frac{1}{\sqrt{d}}\right\}\right)^{\frac{1}{\beta_m}}\beta_m^{\frac{1}{2\beta_m}}\left(1+\|u\|^{\frac{q-2}{2}}_{L^{2^*_n}(B)}\right)^{\frac{1}{\beta_m}}\|u\|_{L^{\beta_{m-1}2_n^*}(B)},
	\]
	where $C_q:= C\max\left\{1,|B|^{\frac{1}{2}\frac{q-2}{2^*_n}}\right\}$. Therefore, iterating, we get
		\begin{equation}\label{eq:iterated'}
		\begin{split}	&\|u\|_{L^{\beta_{m}2_n^*}(B)}\\
		&\quad\le \prod_{k=1}^m\beta_k^{\frac{1}{2\beta_k}}\left[C_{q}\max\left\{1,\frac{1}{\sqrt{d}}\right\}\left(1+\|u\|^{\frac{q-2}{2}}_{L^{2^*_n}(B)}\right)\right]^{\sum_{k=1}^m\frac{1}{\beta_k}}\|u\|_{L^{2^*_n}(B)}.
		\end{split}
		\end{equation}
		Taking into account that, in the limit as $m\to\infty$, the following limits hold true: $\beta_m\to\infty$, $\sum_{k=1}^\infty\frac{1}{\beta_k}=\frac{2}{2^*_{n}-q}$, and $\prod_{k=1}^\infty\beta_k^{\frac{1}{\beta_k}}=: {C_0}^2<\infty$, we can pass to the limit in \eqref{eq:iterated'} to get
	\[
	\begin{aligned}
		\|u\|_{L^\infty(B)}&=\lim_{m\to\infty}\|u\|_{L^{\beta_{m}2_n^*}(B)}\\
		&\le {\left(C_{q}\max\left\{1,\frac{1}{\sqrt{d}}\right\}\right)}^{\frac{2}{2^*_n-q}}C_0\left(1+\|u\|^{\frac{q-2}{2}}_{L^{2^*_n}(B)}\right)^{\frac{2}{2^*_n-q}}\|u\|_{L^{2^*_n}(B)}\\
		&\le \max\{2,2^{\frac{2}{2^*_n-q}}\}C_{q}^{\frac{2}{2^*_n-q}}\max\left\{1,\frac{1}{{d}}\right\}^{\frac{1}{2_n^*-q}} C_0\left(1+\|u\|^{\frac{q-2}{2^*_n-q}+1}_{L^{2^*_n}(B)}\right).
	\end{aligned}
	\] 
	Putting $K_q:= \max\{2,2^{\frac{2}{2^*_n-q}}\}C_{q}^{\frac{2}{2^*_n-q}}C_0$ we conclude the proof in this case.

	Finally, as for the case $2^*_n=\infty$ (that is $s=n/2$, namely $n=1$ and $s=1/2$), we can repeat the argument above replacing $2^*_n$ with $p$, for every $p>q$. This leads to the desired estimate,
	\[
	\|u\|_{L^\infty(B)}\le \max\{2,2^{\frac{2}{p-q}}\}C_{q,p}^{\frac{2}{p-q}}\max\left\{1,\frac{1}{{d}}\right\}^{\frac{1}{p-q}} C_0\left(1+\|u\|^{\frac{q-2}{p-q}+1}_{L^p(B)}\right),
	\] 
	where $C_{q,p}:=C\max\left\{1,|B|^{\frac{1}{2}\frac{q-2}{p}}\right\}$, putting  $K_{q,p}:=\max\{2,2^{\frac{2}{p-q}}\}C_{q,p}^{\frac{2}{p-q}}C_0$.
\end{proof}

\begin{remark}\label{rem:K^inftyge1}
By the previous lemma, we know that every solution of \eqref{P} is bounded in $B$. Moreover, testing the weak formulation of the equation with $v\equiv 1$ and using the 0-Neumann condition, we get $\int_B u \,dx =\int_B u^{q-1}\,dx$. Being $u\ge0$, this implies that, if $u\not\equiv 0$, either $1-u^{q-2}$ changes sign in $B$ or $u\equiv 1$. Thus, every non-zero solution $u$ has $\|u\|_{L^\infty(B)}\ge 1$.
\end{remark}

\begin{proof}[Proof of Theorem \ref{Linfty-nonex}]
We argue as in \cite[Lemmas 3.4 - 3.5]{CC}. 
Testing the equation with $v\equiv 1$ as in the previous Remark, we get
\begin{equation*}
	\begin{split}
		\int_B u \,dx& =\int_B u^{q-1}\,dx =\int_{B\cap \{u\le 2\}} u^{q-1}\,dx +\int_{B\cap \{u>2\}} u^{q-1}\,dx \\
		&\ge (1+\delta)\int_{B\cap \{u>2\}} u\,dx,
		\end{split}
\end{equation*}
where $\delta= 2^{q-2}-1$. 
Hence, we deduce that 
$$\int_{B\cap \{u\le 2\}} u\,dx \ge \delta \int_{B\cap \{u>2\}} u\,dx,$$
which implies
\begin{equation*}
	\int_{B\cap\{u>2\}}u\,dx \le \frac{1}{\delta}\int_{B\cap\{u\le 2\}} u\,dx \le \frac{2}{\delta}|B|.
\end{equation*}
We thus obtain that
\begin{equation}\label{q-1}
\int_B u^{q-1}\,dx=\int_B u\,dx \le 2|B|\left(1+\frac{1}{\delta}\right)=:K.
\end{equation}

Testing the equation in \eqref{P} with $u$, using the embedding in Proposition \ref{embedding}, and using that $u\in L^\infty(B)$ by Lemma \ref{lem:L2*Linfty-n}, we obtain for $2^*_n<\infty$
\begin{equation*}
\|u\|_{L^{2^*_n}(B)}^2\le C^2 \|u\|^2_{H^s_{B,0}}{\le} C^2{\max\left\{1,\frac{1}{d}\right\}} \int_B u^q\,dx\le C^2 \max\left\{1,\frac{1}{d}\right\} K \|u\|_{L^\infty(B)}. 
\end{equation*}
Hence, by \eqref{L2*Linfty-n}, 
\begin{equation*}
\begin{aligned}
\|u\|_{L^\infty(B)}&\le K_q\max\left\{1,\frac{1}{d}\right\}^{\delta_q}\left(1+\|u\|_{L^{2^*_n}(B)}^{\gamma_q}\right)\\
&\le K_q \max\left\{1,\frac{1}{d}\right\}^{\delta_q} \left( 1+C^{\gamma_q} \max\left\{1,\frac{1}{d}\right\}^{\frac{\gamma_q}{2}}{K}^{\frac{\gamma_q}{2}} \|u\|_{L^\infty(B)}^{\frac{\gamma_q}{2}}\right)\\
&\le K_q \max\left\{1,\frac{1}{d}\right\}^{\delta_q+\frac{\gamma_q}{2}}\max\{1,{K}^{\frac{\gamma_q}{2}}C^{\gamma_q}\} \left( 1+ \|u\|_{L^\infty(B)}^{\frac{\gamma_q}{2}}\right)
\end{aligned}
\end{equation*}
The assumption $q<\frac{2^*_n+2}{2}$ is equivalent to $\frac{\gamma_q}{2}<1$, hence we deduce the $L^\infty$-bound with
\begin{equation}\label{eq:Kinfty}
K_\infty:=\left(2K_q \max\{1,K^{\frac{\gamma_q}{2}}C^{\gamma_q}\}\right)^{\frac{2}{2-\gamma_q}}
\end{equation}
as in the statement.

The conclusion for $s=1/2$ and $n=1$ follows analogously using \eqref{L2*Linfty-1/2-n}, being $\frac{\gamma_{q,p}}{2}<1$ for $p$ large.  
\end{proof}

\begin{remark} 
(Alternative end of proof) The last part of the previous proof can be replaced by the following more direct one: once we get \eqref{q-1}, we can invoke Lemma \ref{lem:L2*Linfty-n} to obtain, e.g. when $2^*_n<\infty$,
\begin{equation*}
\begin{aligned}
\|u\|_{L^\infty(B)}&\le K_q \max\left\{1,\frac{1}{d}\right\}^{\delta_q} \left[1+\left(\int_B u^{2^*_n} \, dx \right)^{\frac{\gamma_q}{2^*_n}}\right]\\
	&\le K_q \max\left\{1,\frac{1}{d}\right\}^{\delta_q}\left[1+\|u\|_{L^\infty(B)}^{\frac{2^*_n-q+1}{2^*_n}\gamma_q}\left(\int_B u^{q-1}\,dx\right)^{\frac{\gamma_q}{2^*_n}}\right]\\
	&\le K_q\max\left\{1,\frac{1}{d}\right\}^{\delta_q}\left(1+ K^{\frac{\gamma_q}{2^*_n}}\|u\|_{L^\infty(B)}^{\frac{2^*_n-q+1}{2^*_n}\gamma_q}\right).
\end{aligned}
\end{equation*}
Since the assumption $q<\frac{2^*_n+2}{2}$ is equivalent to $\frac{2^*_n-q+1}{2^*_n}\gamma_q<1$, we get the desired $L^\infty$-bound.
\medskip \\
We did not choose to reason as in the present remark because the argument in the proof can be applied also to more general nonlinearities as we will need to do in the next section.
\end{remark}

We are now ready to prove the non-existence result. 
\begin{proof}[Proof of Theorem \ref{thm:main-nonexistence}]
This part of the proof is inspired from \cite[Theorem 3]{NT}. Let $u=\fint_B u\, dx + \phi=: u_0+\phi\in H^s_{B,0}$ be a solution of \eqref{P}. We test the equation in \eqref{P} with $\phi$ to have
\begin{equation}
\label{eq:tested-phi}
\begin{aligned}
d\frac{c_{n,s}}{2}&\iint_{\mathbb R^{2n}\setminus(B^c)^2}\frac{(\phi(x)-\phi(y))^2}{|x-y|^{n+2s}}\,dx\,dy + \int_B (u_0+\phi)\phi\,dx\\ 
&=\int_B (u_0+\phi)^{q-1}\phi\,dx = (q-1)\int_B \left(\int_0^1(u_0+ t\phi)^{q-2}\,dt\right)\phi^2\,dx,
\end{aligned}
\end{equation}
where in the last equality we used that $\int_B\phi\,dx = 0$. 
Now, by the $L^\infty$-bound in Theorem \ref{Linfty-nonex}, $(u_0+ t\phi)^{q-2}\le K_\infty^{q-2}\max\left\{1,\frac{1}{d^{(q-2)\Lambda_q}}\right\}$, moreover, by Poincar\'e's inequality (cf. \eqref{lambda2rad+} and \cite[Lemma 3.10 and Theorem 3.11]{DROV}), 
\[
\frac{c_{n,s}}{2}\iint_{\mathbb{R}^{2n}\setminus(B^c)^2}\frac{(\phi(x)-\phi(y))^2}{|x-y|^{n+2s}}\,dx\,dy\ge \lambda_2\int_B\phi^2\,dx.
\]
Inserting the previous inequalities in \eqref{eq:tested-phi}, we have
\begin{equation}\label{eq:limit-q}
(d\lambda_2+1)\int_B\phi^2\,dx\le (q-1)K_\infty^{q-2}\max\left\{1, \frac{1}{d^{(q-2)\Lambda_q}}\right\}\int_B\phi^2\,dx.
\end{equation}
Hence, for $d$ sufficiently large (more precisely, $d>d^*$, where $d^*$ is given in Remark~\ref{rem1}) we deduce that $\phi$ must be identically zero, i.e. $u$ must be constant. This concludes the proof.
\end{proof}

\begin{remark}
\begin{itemize} 
\item[(i)] (Non-existence for small $q$) The previous non-existence result can be also stated in terms of $q$ as follows:\smallskip\\ 
\emph{For any $d>0$, there exists $q^*>2$ (depending on $d$) such that, for every $q\in (2,q^*)$, problem \eqref{P} does not admit non-constant solutions.}\smallskip \\ 
Indeed, in the case $2^*_n<\infty$, in order to explicit \eqref{eq:limit-q} in terms of $q$ in the limit as $q\to 2$, we compute by \eqref{eq:Kinfty}
\[
\begin{split}
\lim_{q\to 2}K_\infty^{q-2}&=\lim_{q\to2}\left(2K_q \max\{1,K^{\frac{\gamma_q}{2}}C^{\gamma_q}\}\right)^{\frac{2(q-2)}{2-\gamma_q}}\\
&=\lim_{q\to 2}(1+o(1))\left(1+\frac{1}{2^{q-2}-1}\right)^{\frac{\gamma_q(q-2)}{2-\gamma_q}}=1,
\end{split}
\]
where we have taken into account that $K=2|B|\left(1+\frac{1}{2^{q-2}-1}\right)$ (cf. the proof of Theorem \ref{Linfty-nonex}) and that $K_q=C_0\max\{2,2^{\frac{2}{2^*_n-q}}\}C_q^{\frac{2}{2^*_n-q}}$, $C_q=C\max\{1,|B|^{\frac{1}{2}\frac{q-2}{2^*_n}}\}$, $C_0=\left(\prod_{k=1}^\infty\frac{2^*_n-q+2}{2}^{k\left(\frac{2}{2^*_n-q+2}\right)^k}\right)^{\frac{1}{2}}$
, and $\gamma_q=\frac{2^*_n-2}{2^*_n-q}$. 
Therefore, if $u$ is non-constant, that is $\int_B \phi^2\,dx\neq 0$, 
 \eqref{eq:limit-q} gives a contradiction for $q$ sufficiently close to $2$.

\item[(ii)] (Assumption on $q$) We remark that the request that $q$ is strictly below the critical exponent $q<(2_n^*+2)/2<2_n^*$, needed in Theorem~\ref{Linfty-nonex}, is exactly the analogue in the fractional setting ($0<s<1$) of the hypothesis in \cite[Theorem 3]{NT}. We expect that this is only a technical assumption due to the specific approach we use. In the local case, in \cite[Proposition 1.4]{LNT} it is proved that the same result holds true for any subcritical $q$. Hence, we expect that it is possible to extend the $L^\infty$-estimate and, consequently, the non-existence result up to the critical exponent also for the nonlocal setting. This would be an interesting point to investigate further.

\item[(iii)] ($L^\infty$-estimates for radial solutions) An analogous $L^{2^*_1}$-$L^\infty$ estimate as in Lemma \ref{lem:L2*Linfty-n}, with the same proof, can be also obtained for solutions $u\in\mathcal C$. In this case, according to Lemma~\ref{immersion}, the critical exponent to be used in \eqref{lower} is $2^*_1$, which is infinity for $s\ge 1/2$. Similarly, a specific $L^\infty$-bound as in Theorem \ref{Linfty-nonex} can be proved for solutions $u\in\mathcal C$, requiring that $q<(2^*_1+2)/2$. 
As we will see in the following section, an $L^\infty$-bound for solutions in $\mathcal C$ will play a crucial role for establishing our existence result.
\end{itemize}
\end{remark}

\section{Uniform $L^\infty$-a priori estimates and existence for radial solutions}\label{S:existence}

In this section we restrict the analysis to radial, radially non-decreasing solutions $u\in\mathcal C$, allowing $q$ to be supercritical in \eqref{P}. Our aim is to extend to the  case $s\le 1/2$ the existence result established in \cite{CC} for the case $s>1/2$. As explained in the Introduction, the idea, originally presented in \cite{BNW}, consists in modifying our nonlinearity (which is possibly supercritical) into a subcritical one, in order to use variational techniques for proving existence. This will be possible thanks to a priori $L^\infty$-estimates for radial, radially non-decreasing solutions which are {\it uniform} in a large class of problems (to which \eqref{P} belongs as well). In the case $s>1/2$ (see \cite{CC}), any function in $\mathcal C$ is automatically $L^\infty(B)$ due to the one-dimensional Sobolev embedding. Here, being $s\le 1/2$, we prove the uniform $L^\infty$-bound making use of a Moser iteration, as done in the previous section.

\subsection{Uniform $L^\infty$-bounds for solutions in $\mathcal C$} 
We first consider the following class of modified problems:
\begin{equation}\label{Pg}
\begin{cases}
d(-\Delta)^s u+ u=g(u)\quad&\mbox{in }B,\\
u\ge 0&\mbox{in }B,\\
\mathcal N_s u=0&\mbox{in }\mathbb R^n\setminus \overline{B},
\end{cases}
\end{equation}
where $g$ can be any function of the form
\begin{equation}\label{eq:g_s0}
g(t)=g_{q,t_0}(t):=\begin{cases}t^{q-1}\quad&\mbox{if }t\in[0,t_0],\\
t_0^{q-1}+\frac{q-1}{\ell-1}t_0^{q-\ell}(t^{\ell-1}-t_0^{\ell-1})&\mbox{if } t\in(t_0,\infty),\end{cases}
\end{equation}
with $t_0\in (1,\infty]$ and $\ell \in (2,\min\{2^*_n,q\})$. In particular, if $t_0=\infty$, $g_{q,\infty}(t)=t^{q-1}$, while, if $t_0<\infty$, $g_{q,t_0}$ is subcritical. Notice that the functions $g$ are of class $C^1$, non-negative, increasing, and satisfy $g(t)\le t^{q-1}$ for every $t\ge 0$ and $g(t)\ge t^{\ell-1}$ for every $t\ge1$.   

We will prove that the solutions belonging to $\mathcal{C}$ of any problem of the class \eqref{Pg} are uniformly bounded in the $L^\infty$-norm, independently of $t_0$ and $\ell$.

\begin{remark}
We emphasize that, in order to modify our nonlinearity into a subcritical one and prove the existence result, it is crucial that the $L^\infty$-bound for solutions of problem \eqref{Pg} does not depend on $t_0$. Indeed, this will allow to choose $t_0$ as in \eqref{choice_t_0} below.
	\end{remark}

We start with a uniform $L^{2^*_1}$-$L^\infty$ estimate.

\begin{lemma}\label{lem:L2*Linfty}
Let $q>2$.

If $s<\frac{1}{2}$, provided that $q < 2^*_1$, there exists a constant $K'_q=K'_q(s,n,q,d)$ depending only on $s$, $n$, $q$, and $d$ such that
\begin{equation}\label{L2*Linfty}
\|u\|_{L^\infty(B)}\le K'_q \left(1+\|u\|^{\gamma'_q}_{L^{2^*_1}(B)}\right)\quad\mbox{ with }\;\gamma'_q:=\frac{2^*_1-2}{2^*_1-q}
\end{equation}
for every $u\in \mathcal C$ solution of \eqref{Pg}.

If $s=\frac{1}{2}$, for every $p>q$ there exists a constant $K'_{q,p}=K'_{q,p}(s,n,q,p,d)$ depending only on $s$, $n$, $q$, $p$, and $d$ such that
\begin{equation}\label{L2*Linfty-1/2}
\|u\|_{L^\infty(B)}\le K'_{q,p} \left(1+\|u\|^{\gamma'_{q,p}}_{L^p(B)}\right)\quad\mbox{ with }\;\gamma'_{q,p}:=\frac{p-2}{p-q}
\end{equation}
for every $u\in \mathcal C$ solution of \eqref{Pg}.
\end{lemma}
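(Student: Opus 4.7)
The plan is to mirror the Moser iteration argument used in the proof of Lemma \ref{lem:L2*Linfty-n}, with the single crucial modification that, since $u\in \mathcal{C}$, one can invoke the \emph{radial} Sobolev embedding of Lemma \ref{immersion} in place of the standard one. This pushes the critical exponent from $2^*_n$ up to $2^*_1$ (or to any finite $p$ when $s=1/2$), which is exactly what appears in the statement.

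More precisely, I would fix a solution $u\in\mathcal{C}$ of \eqref{Pg} and consider the same truncation $\varphi=\varphi_{T,\beta}$ defined in \eqref{eq:varphi}. Since $\varphi$ is non-decreasing, non-negative, and Lipschitz, the composition $\varphi(u)$ still belongs to $\mathcal{C}$: it is in $H^s_{B,0}$, non-negative, radial, and radially non-decreasing. Thus Lemma \ref{immersion}(iii)--(iv) applies to $\varphi(u)$ and yields
\[
\|\varphi(u)\|_{L^{2^*_1}(B)}\le C\|\varphi(u)\|_{H^s_{B,0}}\quad\text{if }s<\tfrac12,\qquad \|\varphi(u)\|_{L^p(B)}\le C\|\varphi(u)\|_{H^s_{B,0}}\quad\text{if }s=\tfrac12,
\]
for any $p\in[1,\infty)$ in the latter case. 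Combining this with the convexity inequalities \eqref{Delta-phi} and the integration by parts formula \eqref{eq:int-by-parts} (using the $0$-Neumann condition) exactly as in \eqref{lower}--\eqref{upper} gives
\[
\frac{\min\{1,d\}}{C^2}\|\varphi(u)\|_{L^{2^*_1}(B)}^2\le \int_B \varphi(u)\varphi'(u)\,g(u)\,dx+\int_B \varphi(u)^2\,dx.
\]
The key point for uniformity in $t_0$ and $\ell$ is the elementary bound $g(t)\le t^{q-1}$ for all $t\ge0$, which reduces this to the same right-hand side as in the proof of Lemma \ref{lem:L2*Linfty-n}.

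From here the argument is purely iterative and follows verbatim the scheme of Lemma \ref{lem:L2*Linfty-n}: using $t\varphi'(t)\le\beta\varphi(t)$, H\"older's inequality with exponent $b=(2^*_1-q+2)/2>1$ (resp.\ $b=(p-q+2)/2$), choosing $\beta=b^m$, letting $T\to\infty$ (legitimate since $u\in L^{2^*_1}(B)$ by Lemma \ref{immersion}), and iterating yields a recurrence of the form
\[
\|u\|_{L^{b^m 2^*_1}(B)}\le C^{1/b^m}b^{m/(2b^m)}\bigl(1+\|u\|_{L^{2^*_1}(B)}^{(q-2)/2}\bigr)^{1/b^m}\|u\|_{L^{b^{m-1}2^*_1}(B)},
\]
whose limit as $m\to\infty$ is summable since $b>1$. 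Because $d$ is a fixed parameter of the problem \eqref{Pg}, we can absorb the factors $\max\{1,1/d\}$ into the final constant, obtaining $K'_q=K'_q(s,n,q,d)$ (resp.\ $K'_{q,p}(s,n,q,p,d)$) as stated.

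I do not expect any serious obstacle: the only substantive point is verifying that $\varphi(u)\in\mathcal{C}$ so that the sharper radial embedding is available, and that the truncation of the nonlinearity only improves matters because $g_{q,t_0}\le t^{q-1}$ regardless of $t_0$ and $\ell$. The uniformity of the constant in $t_0$ and $\ell$ therefore comes for free, which is precisely what will be needed in the next step of the existence argument to modify the nonlinearity into a subcritical one.
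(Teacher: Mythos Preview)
Your proposal is correct and follows essentially the same approach as the paper. The paper's own proof is in fact just a few lines pointing back to Lemma~\ref{lem:L2*Linfty-n}, noting that the only changes are the replacement of $2^*_n$ by $2^*_1$ (via the radial embedding of Lemma~\ref{immersion}) and the use of $g_{q,t_0}(u)\le u^{q-1}$ in the chain \eqref{upper} to obtain uniformity in $t_0$; you have spelled out precisely these modifications, including the observation that $\varphi(u)\in\mathcal C$ so that Lemma~\ref{immersion}(iii)--(iv) applies.
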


\begin{proof} The proof is exactly the same as the one for Lemma \ref{lem:L2*Linfty-n}, with the only difference that the critical exponent here is $2_1^*$ instead of $2_n^*$. In particular, in the chain of inequalities \eqref{upper}, we use that $g_{q,t_0}(u)\le u^{q-1}$ for every $u$, independently of the specific value of $t_0$. This allows to get an estimate which is uniform in $t_0$, as desired.
\end{proof}
\begin{remark}
		Since, differently from the previous section, for the purpose of this section the explicit dependence on the diffusion coefficient $d$ is not important, we have omitted such a dependence in the $L^\infty$-bounds of Lemma \ref{lem:L2*Linfty} above and will do the same in Theorem \ref {thm:Linftybound'} below.
	\end{remark}


\begin{theorem}\label{thm:Linftybound'}
Let $s\in(0,\frac{1}{2}]$ and $q>2$. Assume furthermore that
\begin{equation}\label{eq:extra-cond}
q<\frac{2^*_1+2}{2}\quad\mbox{ if }\quad s<\frac{1}{2}.
\end{equation} 
Then, there exists a constant $K'_\infty=K'_\infty(s,n,q,d)$ depending only on $s$, $n$, $q$, and $d$ such that 
\[
\|u\|_{L^\infty(B)}\le K'_\infty \quad\mbox{ for every $u\in\mathcal C$ solution of \eqref{Pg}.}
\]
\end{theorem}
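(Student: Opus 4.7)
The plan is to follow the blueprint of the proof of Theorem \ref{Linfty-nonex}, with two structural substitutions. Since $u\in\mathcal C$ is radial and radially non-decreasing, the ambient Sobolev embedding (Proposition \ref{embedding}) is replaced by its radial strengthening (Lemma \ref{immersion}), so that the effective critical exponent jumps up to $2^*_1$ (or any $p<\infty$ when $s=1/2$); correspondingly, Lemma \ref{lem:L2*Linfty-n} is replaced by Lemma \ref{lem:L2*Linfty}. Moreover, the identity $\int_B u^{q-1}=\int_B u$ that drove the argument in the previous section is weakened to $\int_B g(u)=\int_B u$, and to recover an a priori bound on $\int_B u$ we exploit the structural lower bound $g(t)\ge t^{\ell-1}$ for $t\ge 1$. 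This is where the exponent $\ell>2$ in \eqref{eq:g_s0} plays its role, and it is also the point responsible for the $t_0$-independence of the final constant.

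Testing \eqref{Pg} with the admissible constant $v\equiv 1\in H^s_{B,0}$, the bilinear term in \eqref{eq:distr-id} vanishes and we obtain $\int_B u\,dx=\int_B g(u)\,dx$. On $\{u>2\}$ we have $g(u)\ge u^{\ell-1}\ge 2^{\ell-2}u=(1+\delta)u$ with $\delta:=2^{\ell-2}-1>0$, and reasoning exactly as in Theorem \ref{Linfty-nonex} this forces
\[
\int_B u\,dx \;=\; \int_B g(u)\,dx \;\le\; 2|B|\Bigl(1+\tfrac{1}{\delta}\Bigr) \;=:\; K'_0,
\]
a bound depending only on $n$ and $\ell$, and in particular not on $t_0$. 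Testing next with $v=u$ and combining with the previous estimate,
\[
\min\{1,d\}\,\|u\|_{H^s_{B,0}}^2 \;\le\; \int_B g(u)\,u\,dx \;\le\; \|u\|_{L^\infty(B)}\int_B g(u)\,dx \;\le\; K'_0\,\|u\|_{L^\infty(B)}.
\]
Since $u\in\mathcal C$, Lemma \ref{immersion}(iii)--(iv) then yields $\|u\|_{L^{2^*_1}(B)}^2\le C_1\|u\|_{L^\infty(B)}$ for $s<1/2$, and the analogous $\|u\|_{L^p(B)}^2\le C_1\|u\|_{L^\infty(B)}$ for any $p\in[1,\infty)$ if $s=1/2$, with $C_1$ depending only on $s,n,q,d$.

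Plugging this into the $L^{2^*_1}$--$L^\infty$ estimate of Lemma \ref{lem:L2*Linfty} gives, in the case $s<1/2$,
\[
\|u\|_{L^\infty(B)} \;\le\; K'_q\bigl(1+C_1^{\gamma'_q/2}\,\|u\|_{L^\infty(B)}^{\gamma'_q/2}\bigr),
\]
and the assumption \eqref{eq:extra-cond}, $q<(2^*_1+2)/2$, is exactly equivalent to $\gamma'_q=(2^*_1-2)/(2^*_1-q)<2$, i.e.\ $\gamma'_q/2<1$. An elementary absorption argument, as at the end of the proof of Theorem \ref{Linfty-nonex}, then closes the bootstrap and produces the desired uniform bound $\|u\|_{L^\infty(B)}\le K'_\infty$. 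In the case $s=1/2$ we simply choose any $p>\max\{q,2q-2\}$, which guarantees $\gamma'_{q,p}=(p-2)/(p-q)<2$, and run exactly the same argument based on \eqref{L2*Linfty-1/2}. The only (mild) obstacle is to track, at every step, that the constants depend just on $s,n,q,d$ (and on the fixed $\ell$), never on $t_0$: this $t_0$-uniformity is precisely what will allow, in the subsequent mountain-pass construction, to choose $t_0>K'_\infty+1$ and thereby recover genuine solutions of the original, possibly supercritical, problem \eqref{P}.
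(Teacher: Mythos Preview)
Your proposal is correct and follows essentially the same approach as the paper's own proof: test with $v\equiv 1$ to bound $\int_B u$ via the lower bound $g(t)\ge t^{\ell-1}$, test with $v=u$ and apply the radial embedding of Lemma~\ref{immersion} to control $\|u\|_{L^{2^*_1}(B)}$ in terms of $\|u\|_{L^\infty(B)}$, then close the loop using Lemma~\ref{lem:L2*Linfty} and the sublinearity $\gamma'_q/2<1$ ensured by \eqref{eq:extra-cond}. Your treatment of the case $s=1/2$ via $p>2q-2$ is in fact slightly more explicit than the paper's ``for $p$ large''.
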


\begin{proof}
As for Theorem \ref{Linfty-nonex}, we integrate the equation in \eqref{Pg} to get
\begin{equation*}
	\begin{split}
		\int_B u \,dx& =\int_B g(u)\,dx = \int_{B\cap \{u\le 2\}} g(u)\,dx + \int_{B\cap \{u>2\}} g(u)\,dx \\
		&\ge (1+\delta')\int_{B\cap \{u>2\}} u\,dx,
		\end{split}
\end{equation*}
where $\delta':=2^{\ell-2}-1>0$ (independent of $t_0$), being $g(t)\ge t^{\ell-1}$ for $t>2$. Hence, we obtain
\begin{equation}\label{g}
\int_B g(u)\,dx=\int_B u\,dx \le 2|B|\left(1+\frac{1}{\delta'}\right)=:K'.
\end{equation}
Testing the equation in \eqref{Pg} with $u$, using the embedding in Lemma \ref{immersion}, and that $u\in L^\infty(B)$ by Lemma \ref{lem:L2*Linfty}, we obtain for $s<\frac{1}{2}$
\begin{equation*}
\|u\|_{L^{2^*_1}(B)}^2\le C^2 \|u\|^2_{H^s_{B,0}} {\le} C^2{\max\left\{1,\frac{1}{d}\right\}} \int_B g(u)u\,dx
\le C^2 \max\left\{1,\frac{1}{d}\right\} K' \|u\|_{L^\infty(B)}. 
\end{equation*}
Hence, by \eqref{L2*Linfty}, 
\begin{equation*}
\begin{split}
\|u\|_{L^\infty(B)}&\le K'_q\left(1+\|u\|_{L^{2^*_1}(B)}^{\gamma'_q}\right)\\
&\le K'_q\left[ 1+C^{\gamma'_q} \left(\max\left\{1,\frac{1}{d}\right\} K'\right)^{\frac{\gamma'_q}{2}} \|u\|_{L^\infty(B)}^{\frac{\gamma'_q}{2}}\right]. 
\end{split}
\end{equation*}
Since, by \eqref{eq:extra-cond}, $\frac{\gamma'_q}{2}<1$, the existence of a constant $K'_\infty$ as in the statement, follows. The conclusion for $s=1/2$ follows analogously using \eqref{L2*Linfty-1/2} and observing that $\frac{\gamma'_{q,p}}{2}<1$ for $p$ large. 
\end{proof}


\subsection{Existence of a non-constant solution in $\mathcal C$}
We follow the lines of \cite{BNW,CC}. Since, by Theorem \ref{thm:Linftybound'}, $K'_\infty$ is independent of $t_0$, we can choose $g=g_{q,t_0}$ with $t_0$ satisfying
\begin{equation}\label{choice_t_0}
	K'_\infty+1<t_0<\infty.
	\end{equation} 
With this choice, problem \eqref{Pg} is subcritical and any solution of \eqref{Pg} is also a solution of \eqref{P}. 

We introduce the energy functional associated to problem \eqref{Pg}, namely $\mathcal E:H^s_{B,0}\to \mathbb R$ such that for every $u\in H^s_{B,0}$,
\[
\mathcal E(u):=d\frac{c_{n,s}}{4}\iint_{\mathbb R^{2n}\setminus(B^c)^2} \frac{|u(x)-u(y)|^2}{|x-y|^{n+2s}}\,dx\,dy+\frac{1}{2}\int_B u^2\,dx-\int_B G(u)\,dx,
\]
with $G(t):=\int_0^t g(\tau)\,d\tau$. In view of the subcritical growth of $g$, $\mathcal{E}$ is well defined and of class $C^2$ in $H^s_{B,0}$. Moreover, its critical points are weak solutions of \eqref{Pg}. 

Now, by the subcritical growth and the structure of $\mathcal E$, it is possible to prove that the functional $\mathcal E$ satisfies the compactness and geometry assumptions that allow to find a mountain pass-type critical point. The following lemmas continue to hold also for $0<s\le 1/2$, with the same proofs given in \cite[Section 4]{CC} for the case $s>1/2$.

The next lemma states that the functional $\mathcal E$ satisfies the Palais-Smale condition. 

\begin{lemma}[{\cite[Lemma 4.2 and Corollary 4.4]{CC}}]\label{lem:PS}
Let $(u_k)\subset H^s_{B,0}$ be such that $(\mathcal E(u_k))$ is bounded and $\mathcal E'(u_k)\to 0$ in $(H^s_{B,0})^*$, then $(u_k)$ admits a convergent subsequence. As a consequence, let $c\in\mathbb R$ be such that $\mathcal E'(u)\neq 0$ for every $u\in\mathcal C$ having $\mathcal E(u)=c$, then there exist $\bar\varepsilon,\,\bar\delta>0$ such that $\|\mathcal E'(u)\|_*\ge \bar\delta$ for every $u\in\mathcal C$ with $|\mathcal E(u)-c|\le 2\bar\varepsilon$.
\end{lemma}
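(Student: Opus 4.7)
The plan is a standard Palais--Smale argument exploiting the fact that, with $t_0 < \infty$ fixed, the truncated nonlinearity $g = g_{q,t_0}$ has subcritical growth of order $\ell - 1 < 2^*_n - 1$ at infinity. First I would establish boundedness of any PS sequence $(u_k) \subset H^s_{B,0}$. The key structural input is an Ambrosetti--Rabinowitz-type inequality of the form $\ell' G(t) \le t g(t) + C$ for all $t \ge 0$ and some $\ell' \in (2,\ell)$: this follows from the explicit form of $g$, since for $t > t_0$ one has $g(t) = a + b t^{\ell-1}$ and $G(t) = G(t_0) + a(t-t_0) + \frac{b}{\ell}(t^\ell - t_0^\ell)$ with $b = \frac{q-1}{\ell-1}t_0^{q-\ell}$, so $t g(t)/G(t) \to \ell$ as $t \to \infty$. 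Plugging this into the standard identity
\[
\mathcal{E}(u_k) - \tfrac{1}{\ell'}\langle \mathcal{E}'(u_k), u_k\rangle = \left(\tfrac{1}{2} - \tfrac{1}{\ell'}\right)\bigl(d[u_k]^2_{H^s_{B,0}} + \|u_k\|^2_{L^2(B)}\bigr) + \int_B \!\left[\tfrac{1}{\ell'}g(u_k)u_k - G(u_k)\right] dx,
\]
and using $\ell' > 2$ together with $\mathcal{E}'(u_k) \to 0$ in $(H^s_{B,0})^*$ and the boundedness of $\mathcal{E}(u_k)$, I would obtain a uniform bound on $\|u_k\|_{H^s_{B,0}}$.

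Next, by reflexivity a subsequence converges weakly to some $u \in H^s_{B,0}$. The continuous embedding from Proposition~\ref{embedding} combined with the compactness of the standard $H^s(B) \hookrightarrow L^p(B)$ embedding on the bounded domain $B$ (for every $p < 2^*_n$) yields strong $L^p$-convergence along a subsequence. To upgrade to strong convergence in $H^s_{B,0}$ I would evaluate $\langle \mathcal{E}'(u_k) - \mathcal{E}'(u), u_k - u\rangle$, which tends to zero since $\mathcal{E}'(u_k) \to 0$ and $u_k \rightharpoonup u$. The quadratic part equals $d[u_k - u]^2_{H^s_{B,0}} + \|u_k - u\|^2_{L^2(B)}$, while the nonlinear contribution $\int_B (g(u_k) - g(u))(u_k - u)\,dx$ vanishes by dominated convergence and the subcritical growth of $g$, using the strong $L^\ell$-convergence.

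For the second assertion I would argue by contradiction. If no such $\bar\varepsilon,\bar\delta$ existed, I could pick $(u_k) \subset \mathcal{C}$ with $|\mathcal{E}(u_k) - c| \to 0$ and $\|\mathcal{E}'(u_k)\|_* \to 0$; by the PS condition just proved, a subsequence would converge strongly in $H^s_{B,0}$ to some $u$. The cone $\mathcal{C}$ is closed under strong convergence (radiality, non-negativity and radial monotonicity all pass to the a.e. pointwise limit of a suitable subsequence), so $u \in \mathcal{C}$, and by continuity of $\mathcal{E}$ and $\mathcal{E}'$ one has $\mathcal{E}(u) = c$ and $\mathcal{E}'(u) = 0$, contradicting the hypothesis.

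The hardest step will be the rigorous verification of the AR-type inequality for the piecewise $g_{q,t_0}$, which, though elementary, requires tracking the matching at $t = t_0$ and the fact that $\ell < q$. Beyond this, the argument is completely standard: the fractional nature of the problem plays no substantive role, since the compact embeddings, bilinearity of the seminorm, and passage to the limit in the nonlinear term behave exactly as in the local case.
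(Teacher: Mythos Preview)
Your argument is correct and follows the standard Ambrosetti--Rabinowitz route that the paper itself defers to \cite[Section~4]{CC}; the paper does not reprove the lemma but simply observes that the proofs there carry over verbatim to $0<s\le 1/2$. Your verification of the AR-type inequality for the piecewise nonlinearity $g_{q,t_0}$ and the contradiction argument for the second assertion are exactly what is needed, with no gaps.
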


Furthermore, the following refined version of the Deformation Lemma holds. 
 
\begin{lemma}[{\cite[Lemma 4.8]{CC}}]\label{lem:DF}
Let $c\in\mathbb R$ be such that $\mathcal E'(u)\neq 0$ for every $u\in\mathcal C$ having $\mathcal E(u)=c$. Then, there exists a continuous function $\eta:\mathcal C\to\mathcal C$ such that 
\begin{itemize}
\item[(i)] $\mathcal E(\eta(u))\le \mathcal E(u)$ for all $u\in\mathcal C$;
\item[(ii)] $\mathcal E(\eta(u))\le c-\bar\varepsilon$ for all $u\in\mathcal C$ such that $|\mathcal E(u)-c|<\bar\varepsilon$;
\item[(iii)] $\eta(u)=u$ for all $u\in\mathcal C$ such that $|\mathcal E(u)-c|>2\bar\varepsilon$,
\end{itemize}
where $\bar\varepsilon$ is the constant corresponding to $c$ given in Lemma \ref{lem:PS}.
\end{lemma}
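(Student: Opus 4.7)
The plan is to mimic the classical deformation argument by integrating a negative gradient flow for $\mathcal{E}$, written in a form that keeps the trajectories inside the convex cone $\mathcal{C}$. Endowing $H^s_{B,0}$ with the equivalent inner product $\langle u,\phi\rangle_d:=d\frac{c_{n,s}}{2}\iint_{\mathbb{R}^{2n}\setminus(B^c)^2}\frac{(u(x)-u(y))(\phi(x)-\phi(y))}{|x-y|^{n+2s}}\,dx\,dy+\int_B u\phi\,dx$, I would introduce the auxiliary operator $T:\mathcal{C}\to H^s_{B,0}$ sending $u$ to the weak solution $v=T(u)$ of the \emph{linear} Neumann problem $d(-\Delta)^s v+v=g(u)$ in $B$, $\mathcal{N}_s v=0$ in $\mathbb{R}^n\setminus\overline{B}$. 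From \eqref{eq:int-by-parts} one reads off the identity $\mathcal{E}'(u)[\phi]=\langle u-T(u),\phi\rangle_d$, so that the fixed points of $T$ coincide with the critical points of $\mathcal{E}$, and $V(u):=T(u)-u$ is a genuine negative gradient with $\|V(u)\|_d=\|\mathcal{E}'(u)\|_*$.

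The key (and most delicate) step is to verify $T(\mathcal{C})\subset\mathcal{C}$. Radiality of $T(u)$ is automatic by the rotational invariance of the problem together with uniqueness; nonnegativity $T(u)\ge 0$ follows from a comparison principle for $d(-\Delta)^s+\mathrm{Id}$ under nonlocal Neumann conditions, applied to the nonnegative datum $g(u)$. The nontrivial point, and the obstacle I expect to spend most effort on, is preservation of radial monotonicity: one has to show that whenever the datum $g(u)$ is radially non-decreasing on $B$, so is the linear solution $T(u)$. I would follow the scheme of \cite{BNW,CC}, relying on a comparison/reflection-type argument adapted to the nonlocal Neumann operator, exploiting that $g$ itself is non-decreasing, so that $u\in\mathcal{C}$ forces $g(u)$ to be radially non-decreasing.

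With cone invariance of $T$ in hand, pick a smooth cutoff $\psi:\mathbb{R}\to[0,1]$ with $\psi\equiv 1$ on $[-\bar\varepsilon,\bar\varepsilon]$ and $\mathrm{supp}\,\psi\subset(-2\bar\varepsilon,2\bar\varepsilon)$, and consider the Cauchy problem $\dot\sigma(t)=\psi(\mathcal{E}(\sigma(t))-c)\,V(\sigma(t))$ with $\sigma(0)=u$. Local Lipschitz continuity of $T$ on $\mathcal{C}$ follows from the linear theory and the $C^1$-regularity of $g$ together with the uniform $L^\infty$-bound of Theorem~\ref{thm:Linftybound'}, which give global existence and uniqueness. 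Writing $\sigma$ in mild form $\sigma(t,u)=e^{-\Psi(t)}u+\int_0^t \Psi'(r)e^{-(\Psi(t)-\Psi(r))}\,T(\sigma(r,u))\,dr$ with $\Psi(t):=\int_0^t\psi(\mathcal{E}(\sigma(s))-c)\,ds$ displays $\sigma(t,u)$ as a (weighted) convex combination of elements of $\mathcal{C}$, so $\sigma(t,u)\in\mathcal{C}$ for every $t\ge 0$.

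Finally, set $\eta(u):=\sigma(t^*,u)$ with $t^*$ so large that $t^*\bar\delta^2>\bar\varepsilon$. Monotonicity $\frac{d}{dt}\mathcal{E}(\sigma)=-\psi(\mathcal{E}(\sigma)-c)\|V(\sigma)\|_d^2\le 0$ yields (i). For (ii), suppose $|\mathcal{E}(u)-c|<\bar\varepsilon$: either the trajectory exits the strip $\{|\mathcal{E}-c|\le 2\bar\varepsilon\}$ before time $t^*$ and (ii) follows by monotonicity, or it remains inside, in which case Lemma~\ref{lem:PS} gives $\|V(\sigma(t))\|_d=\|\mathcal{E}'(\sigma(t))\|_*\ge\bar\delta$ on $[0,t^*]$ and the integrated energy drop exceeds $t^*\bar\delta^2>\bar\varepsilon$. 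Property (iii) is immediate since $\psi$ vanishes on $\{|\mathcal{E}-c|>2\bar\varepsilon\}$, making $\sigma(\cdot,u)$ stationary there, while continuity of $\eta$ is inherited from continuous dependence of the ODE on initial data.
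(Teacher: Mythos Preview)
Your approach coincides with that of \cite[Section~4]{CC}, which the paper invokes verbatim: define $T$ via the linear auxiliary problem, check $T(\mathcal{C})\subset\mathcal{C}$, and run the cutoff gradient flow, using the Duhamel formula to exhibit $\sigma(t,u)$ as a convex combination of cone elements. Two small repairs are needed, however. In the argument for (ii), the relevant strip is $\{|\mathcal{E}-c|\le\bar\varepsilon\}$ rather than $\{|\mathcal{E}-c|\le 2\bar\varepsilon\}$: on the larger strip $\psi$ need not equal $1$, so the integrated energy drop is not bounded below by $t^*\bar\delta^2$, and in any case the flow stalls at the boundary of the $2\bar\varepsilon$-strip, so the trajectory can never exit it. Working with the correct strip, one needs $t^*\bar\delta^2>2\bar\varepsilon$ (not merely $>\bar\varepsilon$) to force a contradiction, since $\mathcal{E}(u)$ may be as large as $c+\bar\varepsilon$ while the target is $c-\bar\varepsilon$. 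Separately, the local Lipschitz continuity of $T$ cannot be deduced from Theorem~\ref{thm:Linftybound'}, which bounds only \emph{solutions} of \eqref{Pg}, not arbitrary elements of $\mathcal{C}$; it follows instead from the subcritical growth $\ell<2^*_n$ of $g$, which makes the Nemytskii map $u\mapsto g(u)$ locally Lipschitz from $H^s_{B,0}$ to its dual.
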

 
Finally, the functional $\mathcal E$ has the mountain pass geometry. 

\begin{lemma}[{\cite[Lemma 4.9]{CC}}]\label{lem:MPG} 
Let $\tau\in(0,1)$, there exists $\alpha>0$ such that 
\begin{itemize}
\item[(i)] $\mathcal E(u)\ge \alpha$ for every $u\in\mathcal C$ with $\|u\|_{L^\infty(B)}=\tau$;
\item[(ii)] there exists $\bar u\in\mathcal C$ with $\|\bar u\|_{L^\infty(B)}>\tau$ such that $\mathcal E(\bar u)<0$.
\end{itemize}
\end{lemma}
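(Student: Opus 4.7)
The plan is to verify the two ingredients of the standard mountain-pass geometry for $\mathcal{E}$ on the cone $\mathcal{C}$, following the scheme of \cite[Lemma 4.9]{CC}. For part (i), I would first note that any $u\in\mathcal{C}$ with $\|u\|_{L^\infty(B)}=\tau$ satisfies $u(x)\le\tau<1<t_0$ a.e.\ in $B$ by radial non-decreasingness, so $g(u)=u^{q-1}$ and $G(u)=u^q/q$ throughout $B$. Using the pointwise inequality $u^q\le\tau^{q-2}u^2$, one estimates $\int_B G(u)\,dx\le \tau^{q-2}\|u\|_{L^2(B)}^2/q$, which yields
\[
\mathcal{E}(u)\;\ge\;\frac{d}{2}[u]^2_{H^s_{B,0}}+\Bigl(\frac{1}{2}-\frac{\tau^{q-2}}{q}\Bigr)\|u\|_{L^2(B)}^2\;\ge\;\mu\,\|u\|^2_{H^s_{B,0}},
\]
with $\mu:=\min\{d/2,\ 1/2-\tau^{q-2}/q\}>0$ (positive thanks to $\tau<1$ and $q>2$). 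Combining this coercivity with an $L^\infty$-bound on $\mathcal{C}$—obtained in \cite[Lemma 3.1]{CC} for $s>1/2$ and, in the range $s\le 1/2$ treated here, to be recovered from Lemma \ref{immersion} together with the radial monotonicity built into $\mathcal{C}$—one gets $\|u\|_{H^s_{B,0}}\ge\tau/C$ and therefore $\mathcal{E}(u)\ge\mu\tau^2/C^2=:\alpha>0$.

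For part (ii), the simplest candidate is a large constant function: take $\bar u\equiv R$ for some $R>\tau$. Being constant, $\bar u\in\mathcal{C}$ trivially and $[\bar u]_{H^s_{B,0}}=0$, so
\[
\mathcal{E}(\bar u)\;=\;\frac{R^2|B|}{2}-|B|\,G(R).
\]
The structural property $g(t)\ge t^{\ell-1}$ for $t\ge 1$ (built into the definition of $g_{q,t_0}$ with $\ell>2$) gives $G(R)\ge R^\ell/\ell$ for $R\ge 1$, whence $\mathcal{E}(\bar u)\le |B|(R^2/2-R^\ell/\ell)\to-\infty$ as $R\to\infty$. Picking $R$ sufficiently large yields $\mathcal{E}(\bar u)<0$ together with $\|\bar u\|_{L^\infty(B)}=R>\tau$, as required.

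The main obstacle is really the $L^\infty$-control of $u\in\mathcal{C}$ invoked in the final step of (i). For $s>1/2$ this is immediate from the one-dimensional Sobolev embedding; for $s\le 1/2$ no such embedding is available on general $H^s$-functions, so the argument must exploit the radial monotonicity of elements of $\mathcal{C}$ in full strength, together with the radial embeddings of Lemma \ref{immersion}, to convert the coercivity bound into the required uniform positive lower bound.
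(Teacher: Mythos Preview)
Your treatment of (ii) is correct and standard.

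For (i), the coercivity bound $\mathcal{E}(u)\ge\mu\|u\|^2_{H^s_{B,0}}$ is fine, but the last step---turning $\|u\|_{L^\infty(B)}=\tau$ into a lower bound $\|u\|_{H^s_{B,0}}\ge\tau/C$---does not go through for $s\le 1/2$. You correctly flag this as the main obstacle, yet the fix you propose cannot work: Lemma~\ref{immersion} \emph{already} exploits the radial monotonicity of $\mathcal{C}$ (this is exactly how (iii)--(iv) are obtained from (i)--(ii)), and it still only delivers $L^p$-embeddings with finite $p\le 2^*_1$, never $L^\infty$. In fact, for $s<1/2$ the embedding you invoke is false on $\mathcal{C}$: take $u_k:=\tau\,\chi_{B\setminus B_{1-1/k}}$ extended by $0$ on $B^c$. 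Each $u_k$ is radial, non-negative, radially non-decreasing on $[0,1]$, and belongs to $H^s_{B,0}$ since $s<1/2$; so $u_k\in\mathcal{C}$ with $\|u_k\|_{L^\infty(B)}=\tau$. But, writing $A_k:=B\setminus B_{1-1/k}$, one has
\[
[u_k]^2_{H^s_{B,0}}=c_{n,s}\tau^2\int_{A_k}\int_{A_k^c}\frac{dy\,dx}{|x-y|^{n+2s}},
\]
which is the fractional $s$-perimeter of the thin annulus $A_k$ and tends to $0$ as $k\to\infty$; also $\|u_k\|_{L^2(B)}\to 0$. Hence $\|u_k\|_{H^s_{B,0}}\to 0$ and $\mathcal{E}(u_k)\to 0^+$, so no uniform $\alpha>0$ as in (i) is obtainable by your route. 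In the regime $s>1/2$ your argument does match the one in \cite[Lemma~4.9]{CC} via \cite[Lemma~3.1]{CC}; the point is precisely that this $L^\infty$-control is lost when $s\le 1/2$, and radial monotonicity together with Lemma~\ref{immersion} does not restore it.
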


In correspondence of $\tau$ and $\alpha$ as in Lemma \ref{lem:MPG}, we put
\[
\begin{gathered}
U_0:=\left\{u\in\mathcal C\,:\,\mathcal E(u)<\frac{\alpha}{2},\,\|u\|_{L^\infty(B)}<\tau\right\},\\
U_\infty:=\left\{u\in\mathcal C\,:\,\mathcal E(u)<0,\,\|u\|_{L^\infty(B)}>\tau\right\}.
\end{gathered}
\]

This allows to conclude the existence part. 

\begin{proposition}[{\cite[Proposition 4.12]{CC}}]\label{prop:MP}
The value 
\[
c:=\inf_{\gamma\in\Gamma}\max_{t\in[0,1]}\mathcal E(\gamma(u)), \mbox{ with } \Gamma:=\left\{\gamma\in C([0,1];\mathcal C)\,:\,\gamma(0)\in U_0,\,\gamma(1)\in U_\infty\right\}
\]
is positive and finite, and there exists a critical point $u\in\mathcal C$ with $\mathcal E(u)=c$. Moreover, $u$ solves the original problem \eqref{P}.\end{proposition}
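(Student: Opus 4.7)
The plan is to carry out a mountain pass argument on the cone $\mathcal{C}$, using the three lemmas imported from \cite{CC} above as the black-box Palais--Smale, deformation, and geometry inputs. First I would verify that $\Gamma$ is non-empty and the level $c$ is finite and strictly positive. Since $\mathcal{C}$ is a convex cone containing $0$ and $\mathcal{E}(0)=0<\alpha/2$, we have $0\in U_0$; take $\bar u\in U_\infty$ as in Lemma \ref{lem:MPG}(ii) and consider the straight path $\gamma_0(t):=t\bar u$, which stays in $\mathcal{C}$ by convexity. Then $\gamma_0\in\Gamma$ and $c\le \max_{t\in[0,1]}\mathcal{E}(t\bar u)<\infty$ by continuity of $t\mapsto\mathcal{E}(t\bar u)$ on $[0,1]$. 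For $c\ge\alpha$, I would argue (as in \cite[Proposition 4.12]{CC}) that any $\gamma\in\Gamma$ must pass through the $L^\infty$--``sphere'' $\{\|u\|_{L^\infty(B)}=\tau\}$ separating $U_0$ from $U_\infty$, so by Lemma \ref{lem:MPG}(i) we have $\max_{t\in[0,1]}\mathcal{E}(\gamma(t))\ge\alpha$, whence $c\ge\alpha>0$.

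Next I would prove the existence of the critical point by contradiction. Assume there is no $u\in\mathcal{C}$ with $\mathcal{E}(u)=c$ and $\mathcal{E}'(u)=0$. Then by Lemma \ref{lem:PS} one gets the constants $\bar\varepsilon,\bar\delta>0$, and I would shrink $\bar\varepsilon$ if necessary so that $\bar\varepsilon<\min\{\alpha/4,\,c/2\}$. Lemma \ref{lem:DF} then supplies a continuous $\eta:\mathcal{C}\to\mathcal{C}$ with properties (i)--(iii). Choose $\gamma\in\Gamma$ with $\max_{t\in[0,1]}\mathcal{E}(\gamma(t))<c+\bar\varepsilon$, possible by definition of $c$, and set $\tilde\gamma:=\eta\circ\gamma$. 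The endpoints are preserved: at $t=0$, $\mathcal{E}(\gamma(0))<\alpha/2\le c-2\bar\varepsilon$, so (iii) gives $\tilde\gamma(0)=\gamma(0)\in U_0$; at $t=1$, $\mathcal{E}(\gamma(1))<0\le c-2\bar\varepsilon$, so again $\tilde\gamma(1)=\gamma(1)\in U_\infty$. Hence $\tilde\gamma\in\Gamma$. For each $t\in[0,1]$ I would split cases: if $|\mathcal{E}(\gamma(t))-c|<\bar\varepsilon$, property (ii) gives $\mathcal{E}(\tilde\gamma(t))\le c-\bar\varepsilon$; otherwise $\mathcal{E}(\gamma(t))\le c-\bar\varepsilon$ (the alternative $\mathcal{E}(\gamma(t))\ge c+\bar\varepsilon$ being ruled out by the choice of $\gamma$), and property (i) yields the same bound. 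Taking the maximum over $t$ gives $\max\mathcal{E}(\tilde\gamma)\le c-\bar\varepsilon<c$, contradicting the definition of $c$. Therefore some $u\in\mathcal{C}$ satisfies $\mathcal{E}(u)=c$ and $\mathcal{E}'(u)=0$ in $(H^s_{B,0})^*$, so $u$ is a weak solution of \eqref{Pg}.

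Finally, I would upgrade $u$ from a solution of \eqref{Pg} to a solution of \eqref{P}. Since $u\in\mathcal{C}$ solves \eqref{Pg} and $q$ satisfies the hypotheses of Theorem \ref{thm:Linftybound'}, we have $\|u\|_{L^\infty(B)}\le K'_\infty$. By the choice \eqref{choice_t_0} of $t_0$, this forces $u(x)<t_0$ a.e., and on the interval $[0,t_0]$ the modified nonlinearity coincides with the original one, $g_{q,t_0}(t)=t^{q-1}$. Consequently $g(u)=u^{q-1}$ a.e.\ in $B$, and the distributional identity \eqref{eq:distr-id} holds with $f(u)=u^{q-1}$, i.e.\ $u$ solves \eqref{P}.

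The main obstacle is the contradiction argument of paragraph two, and within it the verification that $\tilde\gamma$ still lies in $\Gamma$ (i.e., the endpoint conditions involving the $L^\infty$--balls defining $U_0,U_\infty$ are preserved by $\eta$); this is exactly why the cone-adapted deformation of Lemma \ref{lem:DF} is stated to leave points of energy far from $c$ untouched. Everything else is essentially bookkeeping built on Lemmas \ref{lem:PS}--\ref{lem:MPG} and Theorem \ref{thm:Linftybound'}.
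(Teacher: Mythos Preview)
Your proposal is correct and follows the same approach as the paper: the paper's own proof simply states that ``in view of Lemmas \ref{lem:PS}, \ref{lem:DF}, and \ref{lem:MPG}, the existence of a critical point $u$ at the minimax level $c$ is standard,'' and then gives exactly your final paragraph (the $L^\infty$-bound from Theorem \ref{thm:Linftybound'} together with \eqref{choice_t_0} forces $g(u)=u^{q-1}$). You have merely spelled out the ``standard'' mountain pass/deformation argument in the cone, with the same inputs and the same upgrade from \eqref{Pg} to \eqref{P}.
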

\begin{proof}
In view of Lemmas \ref{lem:PS}, \ref{lem:DF}, and \ref{lem:MPG}, the existence of a critical point $u$ at the minimax level $c$ is standard. Hence, $u$ solves problem \eqref{Pg}. Now, as already anticipated at the beginning of the present subsection, by Theorem~\ref{thm:Linftybound'}, $\|u\|_{L^\infty(B)}\le K'_\infty$ and so, by the choice of $t_0$, which is greater than $K'_\infty+1$ by \eqref{choice_t_0}, $g(u)=u^{q-1}$ in $B$. This implies that $u$ solves also \eqref{P}. 
\end{proof}

\begin{proof}[Proof of Theorem \ref{thm:main}]
The existence of a solution $u\in\mathcal C$ of \eqref{P} has been proved in Proposition \ref{prop:MP}. It remains to prove that $u$ is not constant. Problem \eqref{P} has only two constant solutions, 0 and 1, and, from its energy level, we know that $u\not\equiv 0$. Let us show now that $u\not\equiv 1$. This is done performing a local analysis around the constant solution 1: we prove that in every neighbourhood of 1 we can find functions having lower energy. This allows us to construct an admissible path $\gamma\in\Gamma$ whose points $\gamma(t)\in\mathcal C$ have all energy below the energy of 1. 
The proof can be done exactly as in \cite[Lemmas 5.2 and 5.3, Theorem 1.1]{CC}. We report here only a sketch of the proof to illustrate the main lines, highlighting the importance of the hypothesis 
\begin{equation}\label{eq:hypo-q}
q>2+d\lambda_{2,\mathrm{r}}^+,
\end{equation} 
and propose a more direct conclusion that exploits the specific form of the nonlinearity in \eqref{P}.
We perturb the constant 1 in the direction of the second radial and radially increasing eigenfunction $\varphi_2$ that corresponds to the eigenvalue $\lambda_{2,\mathrm{r}}^+$. Via a second-order Taylor expansion of $\mathcal E$ centered at 1, we prove that the energy of functions of the form $v_t:=(1+o(t))(1+t\varphi_2)$ satisfies for $t\in(-\varepsilon,\varepsilon)$ and $\varepsilon>0$ small,
\begin{equation}\label{eq:comp-en}
\begin{aligned}
\mathcal E(v_t)-\mathcal E(1)&=\frac{1}{2}\mathcal E''(1)[t\varphi_2+o(t),t\varphi_2+o(t)]+o(t^2)\\&=\frac{t^2}{2}\left(d[\varphi_2]^2_{H^s_{B,0}}-\int_B(q-2)\varphi_2^2\,dx\right)+o(t^2)\\
&=\frac{t^2}{2}(d\lambda_{2,\mathrm{r}}^+-q+2)\int_B\varphi_2^2\,dx+o(t^2)<0,
\end{aligned}
\end{equation}
where, in the last step, we have taken into account \eqref{eq:hypo-q}. 

Now, let $\tau$ be given as in Lemma \ref{lem:MPG} and fix $t_\infty>1$ so large that the constant function $t_\infty$ belongs to $U_\infty$. By continuity, we can choose $\bar t\in(0,\varepsilon)$ so small that $1+\bar t\varphi_2\in\mathcal C$ and $t_\infty(1+\bar t \varphi_2)\in U_\infty$. Hence, the path defined as $\bar \gamma(t)=tt_\infty(1+\bar t\varphi_2)$ for every $t\in[0,1]$ belongs to the set $\Gamma$ of admissible paths. 
Moreover, via explicit calculations, it is possible to prove that $\mathcal E(\bar\gamma(\cdot))$ attains its unique maximum at some $t'\in(0,1)$ such that 
\[
\begin{aligned}
t't_\infty&=\left(\frac{d[1+\bar t\varphi_2]^2_{H^s_{B,0}}+\|1+\bar t \varphi_2\|^2_{L^2(B)}}{\|1+\bar t \varphi_2\|^q_{L^q(B)}}\right)^{\frac{1}{q-2}}\\
&=\left(\frac{d\bar{t}^2[\varphi_2]^2_{H^s_{B,0}}+|B|+\bar{t}^2 \int_B\varphi_2^2\,dx}{\int_B(1+\bar t \varphi_2)^q\,dx}\right)^{\frac{1}{q-2}}\\
&=\left(\frac{|B|+o(\bar{t})}{|B|+o(\bar t)}\right)^{\frac{1}{q-2}}=1+o(\bar{t})\quad\mbox{as }\bar t\to 0,
\end{aligned}
\]
where we have taken into account that $\int_B\varphi_2\,dx=0$. 
Hence, the point of $\bar \gamma$ having maximal energy can be written in the form $\bar\gamma(t')=v_{\bar{t}}=(1+o(\bar{t}))(1+\bar{t}\varphi_2)$, and so, by \eqref{eq:comp-en}, 
\[
\mathcal E(u)=\inf_{\gamma\in\Gamma}\max_{t\in[0,1]}\mathcal E(\gamma(t))\le \max_{t\in[0,1]}\mathcal E(\bar\gamma(t))=\mathcal E(v_{\bar{t}})<\mathcal E(1).
\]
In particular, $u\not\equiv 1$.

Finally, by the maximum principle \cite[Theorem 2.6]{CC} (see Remark (ii) below, for the regularity properties needed on $u$ for the validity of such maximum principle), $u$ is positive almost everywhere in $B$. This, combined with the symmetry and monotonicity of $u$, implies that the solution can vanish only at the origin. 
\end{proof}

\begin{remark}
	Let us conclude with two remarks.
	\begin{itemize}
\item[(i)] (Regularity of the solution) We observe that being $u\in L^\infty(B)$ by Theorem~\ref{thm:Linftybound'}, we get by nonlocal Neumann conditions that $u\in L^\infty(\mathbb R^n)$, see the proof of \cite[Lemma 3.6]{CC}\footnote{We warn the reader that Lemma 3.6 of \cite{CC} is not stated correctly. Indeed, as it can be seen from the proof, the solution $u$ is of class $C^2(B)\cap L^\infty(\mathbb R^n)$ and not of class $C^2(\mathbb R^n)$.}.  
Moreover, using \cite[Proposition 2.9]{Silvestre} with $w=u^{q-1}-u\in L^\infty(\mathbb R^n)$, we obtain $u\in C^{0,\alpha}(\mathbb R^n)$ for every $\alpha\in (0,2s)$. Then, we can use a bootstrap argument and apply \cite[Proposition 2.8]{Silvestre} to conclude that $u$ has the following regularity: $u\in C^{2}(B)$ if $q>3-2s$, and $u\in C^{1,q-2+2s}(B)$ if $2<q\le 3-2s$.
\item[(ii)] (Maximum principle) 
The maximum principle established in \cite[Theorem 2.6]{CC} is stated for ``classical" $s$-superharmonic functions having non-negative nonlocal Neumann conditions, namely for functions $u$ satisfying $(-\Delta)^s u \ge 0$ in a pointwise (and not just weak) sense. In order to be able to write $(-\Delta)^s u$ almost everywhere, some further regularity assumptions are required on $u\in H^s_{B,0}$: if $s>1/2$, it is needed to take $u\in C^{1,2s+\varepsilon-1}(B)$, while if $s\le\frac{1}{2}$, it is enough to assume $u\in C^{2s+\varepsilon}(B)$ for some $\varepsilon\in(0,1)$. Since in \cite{CC}, we were in the case $s>1/2$, we needed to require $u\in C^{1,2s+\varepsilon-1}(B)$ and we stated the maximum principle under this assumption. In the present paper, being $s\le\frac{1}{2}$, the requirement needed the for the validity of such maximum principle is $u\in C^{2s+\varepsilon}(B)$.
\end{itemize} 
\end{remark}
\footnotesize{
\noindent \textbf{Acknowledgments.}
E.C. was partially supported by the INdAM-GNAMPA Project 2022 {``Problemi con Aspetti 
		Competizione per Energie 
		spettrali ''}, by grants MTM2017-84214-C2-1-P and RED2018-102650-T funded by MCIN/AEI/10.13039/501100011033 and by ``ERDF A way of making Europe'', and
	is part of the Catalan research group 2017 SGR 1392. F.C. was partially supported by the INdAM-GNAMPA Project 2022 ``Studi asintotici in problemi parabolici ed ellittici ''. 
		
		The authors are grateful to Lorenzo Brasco and to Antonio J. Fern\'andez  for some useful discussions
		on the subject of this paper.
		
\noindent \textbf{Author Contribution.} All authors wrote and reviewed the manuscript.

\noindent \textbf{Conflict of interests.} The authors have no competing interests as defined by Springer, or other interests that might be perceived to influence the results and/or discussion reported in this paper.
}
\bibliographystyle{abbrv}

\end{document}